\DeclareFontFamily{OT1}{rsfs}{}
\DeclareFontShape{OT1}{rsfs}{n}{it}{<-> rsfs10}{}
\DeclareMathAlphabet{\mathscr}{OT1}{rsfs}{n}{it}
\newtheorem{theorem}{Theorem}[section]
\newtheorem{lemma}[theorem]{Lemma}
\newtheorem{corol}[theorem]{Corollary}
\newtheorem{prop}[theorem]{Proposition}
\theoremstyle{definition} \newtheorem{defin}[theorem]{Definition}}
\theoremstyle{remark} \newtheorem{remark}[theorem]{Remark}
\newtheorem{example}[theorem]{Example}}
\numberwithin{equation}{section}
\newcommand{\Cbb}{{\mathbb{C}}}
\newcommand{\Pbb}{{\mathbb{P}}}
\newcommand{\Zbb}{{\mathbb{Z}}}
\newcommand{\cE}{{\mathscr E}}
\newcommand{\cI}{{\mathscr I}}
\newcommand{\cL}{{\mathscr L}}
\newcommand{\cM}{{\mathscr M}}
\newcommand{\cO}{{\mathscr O}}
\newcommand{\cV}{{\mathscr V}}
\newcommand{\cX}{{\mathscr X}}
\newcommand{\cY}{{\mathscr Y}}
\newcommand{\one}{1\hskip-3.5pt1}
\newcommand{\csm}{{c_{\text{SM}}}}
\newcommand{\cvir}{{c_{\text{vir}}}}
\newcommand{\ctop}{{c_{\text{top}}}}
\newcommand{\qede}{\hfill$\lrcorner$}
\DeclareMathOperator{\rk}{rk}
\DeclareMathOperator{\codim}{codim}
\DeclareMathOperator{\Spec}{Spec}
\DeclareMathOperator{\Sym}{Sym}
\title{
The Chern-Schwartz-MacPherson class of an embeddable scheme
}
\author{Paolo Aluffi}
\address{
Mathematics Department, 
Florida State University,
Tallahassee FL 32306, U.S.A.
}
\email{aluffi@math.fsu.edu}
\begin{document}

\begin{abstract}
The Chern-Schwartz-MacPherson class of a hypersurface in a nonsingular
variety may be computed directly from the Segre class of the jacobian
subscheme of the hypersurface; this has been known for a number of
years. We generalize this fact to arbitrary embeddable schemes: for
every subscheme $X$ of a nonsingular variety~$V$, we define an
associated subscheme $\cY$ of a projective bundle over $V$ and provide
an explicit formula for the Chern-Schwartz-MacPherson class of $X$ in
terms of the Segre class of~$\cY$.  If $X$ is a local complete
intersection, a version of the result yields a direct expression for
the Milnor class of $X$.

For $V=\Pbb^n$, we also obtain expressions for the
Chern-Schwartz-MacPherson class of~$X$ in terms of the `Segre zeta
function' of $\cY$.
\end{abstract}

\maketitle

%%%

\section{Introduction}\label{s:intro}
\subsection{}
The goal of this paper is the generalization to arbitrary subschemes
of nonsingular varieties of a twenty-year old formula for the {\em
  Chern-Schwartz-MacPherson\/} class of hypersurfaces, in terms of the
Segre class of an associated scheme. We first recall the general
context and the relevant definitions; the hurried reader may want to
skip ahead to~\S\ref{ss:projr} for the statement of the main result
for subschemes of projective space.

\subsection{}\label{ss:spat}
Every nonsingular variety $X$ has a canonically defined class in its
homology, namely the total Chern class of its tangent bundle. Deligne
and Grothendieck conjectured, and MacPherson proved
(\cite{MR0361141}), that (at least in characteristic $0$) this class
is a manifestation of a functorial theory of Chern classes which
assigns a distinguished homology class to {\em every\/} complex
projective variety~$X$. The class associated with $X$ is called the
{\em Chern-Schwartz-MacPherson\/} (CSM) class of $X$, $\csm(X)$.
(Brasselet and Schwartz proved (\cite{MR83h:32011}) that the class
$\csm(X)$ agrees via Alexander duality with the class defined earlier
by Marie-H\'el\`ene Schwartz, \cite{MR35:3707, MR32:1727}.)
MacPherson's theory can be refined to give a class in the Chow group
of $X$ (\cite[Example~19.1.7]{85k:14004}), and extended to embeddable
schemes over arbitrary algebraically closed fields of
characteristic~$0$ (\cite{MR1063344}, \cite{MR2282409}), and this is
the notion we adopt in this paper.

The CSM class of $X$ encodes interesting information about the variety $X$. For example,
if $X\subseteq \Pbb^n_\Cbb$ is a complex projective variety, then the degrees of the 
components of $\csm(X)$ carry the same information as the topological Euler characteristics
of its general linear sections
(\cite{MR3031565}). Chern-Schwartz-MacPherson classes of
classical varieties such as Schubert varieties and determinantal varieties have been studied 
extensively and are the object of current research (see e.g., \cite{rimava}, \cite{MR3594289},
\cite{MR3743175}, \cite{rimafe}, \cite{AMSS}).

\subsection{}
In~\cite{MR2001i:14009} we proved a formula for the Chern-Schwartz-MacPherson class
of a {\em hypersurface\/} in a nonsingular variety, in terms of the Segre class of its singularity 
subscheme. (In particular, this yields a formula for the topological Euler characteristic of
arbitrary hypersurfaces of nonsingular varieties.) Applications include computations in 
enumerative geometry (\cite{MR1641591}), 
singularities of logarithmic foliations (\cite{MR2197406}),
Sethi-Vafa-Witten-type formulas (\cite{MR2495684}), and others.
By inclusion-exclusion, the case of hypersurfaces suffices in
order to compute CSM classes of subschemes of nonsingular varieties. This fact is at the
root of most implemented algorithms for the computation of CSM classes in projective
spaces and more general varieties: see \cite{MR1956868}, \cite{MR3484270}, 
\cite{MR3385954}, \cite{MR3608162} and others. (To our knowledge, 
the algorithm presented in~\cite{MR3044490} is the {\em only one\/} currently available 
that does not rely on the result for hypersurfaces from~\cite{MR2001i:14009} and 
inclusion-exclusion.) 

One obvious problem with inclusion-exclusion is that the number of needed computations
grows exponentially with the number of defining hypersurfaces. For this, and for more 
conceptual reasons, it would be desirable to have direct generalizations of the 
result in~\cite{MR2001i:14009} to more general schemes. Such a 
generalization should express the CSM class of a scheme $X$ in terms of the Segre 
class of a related scheme determined by the singularities of $X$, without invoking
inclusion-exclusion. We raised this problem in~\cite[\S4.1]{MR2143071}, and to our
knowledge it has so far remained open in the intended generality. J.~Fullwood 
(\cite{MR3165594}) gave an answer for certain global complete 
intersections\footnote{Fullwood considers complete intersections $M_1\cap\cdots \cap M_k$ 
of hypersurfaces such that $M_1\cap \cdots \cap M_{k-1}$ is nonsingular.}; complete and local
complete intersections are also treated in references with different viewpoints
(among these \cite{MR1873009}, \cite{MR3053711}, \cite{miclalci}); but a result
along the lines envisioned above had to our knowledge not been formulated, even in 
the (unrestricted) complete intersection case.

The purpose of this article is to rectify this situation. For $\iota: X\hookrightarrow V$ 
an {\em arbitrary\/} closed embedding of a scheme $X$ in a nonsingular variety $V$, 
we will provide a formula for $\iota_*\csm(X)\in A_*V$ in terms of the Segre class of 
an associated subscheme of a projective bundle over $V$. In the 
hypersurface case, this formula will agree with the result of~\cite{MR2001i:14009}.
In the case of local complete intersections, it will yield an expression for (the 
push-forward to $V$ of) the so-called {\em Milnor class\/} of $X$. In general, the
formula will make no assumptions on $X$ other than it can be embedded as a closed
subscheme of a nonsingular variety. In fact, the formula will have nontrivial content 
even if $X$ is nonsingular.

\subsection{}\label{ss:projr}
In this introduction we present the result in the particular case in which $V=\Pbb^n$. This 
leads to some simplifications, and is possibly the most useful in concrete computations. 
In~\S\ref{s:stat} we will state the formula for arbitrary nonsingular ambient varieties~$V$.

Let $\iota: X\hookrightarrow \Pbb^n$ be a closed embedding. 
We may choose generators $F_0,\dots, F_r$ for any homogeneous ideal defining $X$ in 
$\Pbb^n$, such that $\deg F_i$ is an integer $d$ independent of $i$. 
Let $\cY$ denote the subscheme of $\Pbb^n \times \Pbb^r$ defined by the ideal
\[
\left( F_0,\dots, F_r\right) + \left(y_0 \frac{\partial F_0}{\partial x_i} + \cdots +
y_r \frac{\partial F_r}{\partial x_i}\right)_{i=0,\dots, n}\quad.
\]
Here $x_0,\dots, x_n$ are homogeneous coordinates in $\Pbb^n$, and $y_0,\dots,
y_r$ are homogeneous coordinates in $\Pbb^r$. Denote by $\pi:\Pbb^n\times\Pbb^r
\to \Pbb^n$ the projection and let $H$, resp., $h$ denote the pull-backs of the hyperplane 
classes from~$\Pbb^n$, resp., $\Pbb^r$. 

\begin{theorem}\label{thm:mainpr}
With notation as above, assume $r\ge n$. Then 
\begin{equation}\label{eq:mainprfor}
\iota_*\csm(X) = \pi_*\left(\frac{(1+H)^{n+1} (1+h)^{r+1}}{1+dH+h} 
\left(s(\cY, \Pbb^n\times \Pbb^r)^\vee \otimes_{\Pbb^n\times \Pbb^r}\cO(dH+h)\right)\right)\quad.
\end{equation}
\end{theorem}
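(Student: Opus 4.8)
The plan is to realize $\cY$ as the singularity subscheme of a single hypersurface in $W:=\Pbb^n\times\Pbb^r$ and then combine the Chern--Schwartz--MacPherson formula for hypersurfaces of~\cite{MR2001i:14009} with the covariance of $\csm$ under proper push-forward. Set $\mathscr F:=y_0F_0+\cdots+y_rF_r$, a section of $\cO(dH+h)$ on $W$, and let $\mathscr Z:=V(\mathscr F)\subseteq W$; assuming $X\neq\Pbb^n$ (the case $X=\Pbb^n$ being classical), the $F_i$ are not all zero, so $\mathscr F\not\equiv 0$ and $\mathscr Z$ is a hypersurface of class $dH+h$. Its partial derivatives are $\partial\mathscr F/\partial y_j=F_j$ and $\partial\mathscr F/\partial x_i=\sum_j y_j\,\partial F_j/\partial x_i$, so, using the Euler relations in the $x$- and in the $y$-variables, the ideal generated by $\mathscr F$ together with all of its partials equals the ideal generated by the partials alone; and this latter ideal is precisely the ideal defining $\cY$. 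Hence $\cY$ is the singularity subscheme of $\mathscr Z$ in $W$. Making this identification clean on the product $\Pbb^n\times\Pbb^r$ (rather than on a single projective space) amounts to gluing the local jacobian descriptions, or to describing the singular locus as the zero scheme of the section of the bundle of first principal parts induced by $\mathscr F$; this is the step I expect to require the most bookkeeping, together with the (routine) verification that the formula of~\cite{MR2001i:14009} applies over the nonsingular variety $W$.

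Granting this, and writing $\mathscr L:=\cO(dH+h)$ and $c(TW)=(1+H)^{n+1}(1+h)^{r+1}$ (from the two Euler sequences, so that $H^{n+1}=h^{r+1}=0$), the hypersurface formula gives
\[
\csm(\mathscr Z)=\frac{(1+H)^{n+1}(1+h)^{r+1}}{1+dH+h}\cap\Bigl((dH+h)\cap[W]+\bigl(s(\cY,W)^\vee\otimes_W\mathscr L\bigr)\Bigr)\,.
\]
Note that the operations $(-)^\vee$ and $\otimes_W\mathscr L$, as well as the factors $c(TW)$ and $c(\mathscr L)^{-1}=(1+dH+h)^{-1}$, are exactly the ones appearing in~\eqref{eq:mainprfor}. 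On the other hand, by covariance of MacPherson's transformation $c_*$ (where $c_*(\one_Z)$ is the image of $\csm(Z)$ in the ambient Chow group), one has $\pi_*\csm(\mathscr Z)=c_*(\pi_!\one_{\mathscr Z})$. Over a point $p\in\Pbb^n$ the fibre $\mathscr Z_p$ is the hyperplane $\{\sum_j y_jF_j(p)=0\}\cong\Pbb^{r-1}$ when $p\notin X$, and is all of $\Pbb^r$ when $p\in X$; since $\chi(\Pbb^{r-1})=r$ and $\chi(\Pbb^r)=r+1$, we obtain $\pi_!\one_{\mathscr Z}=r\,\one_{\Pbb^n}+\one_X$, and therefore $\pi_*\csm(\mathscr Z)=r\,\csm(\Pbb^n)+\iota_*\csm(X)$.

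Comparing the two expressions for $\pi_*\csm(\mathscr Z)$ and splitting the sum in the first one, the theorem reduces to the identity
\[
\pi_*\left(\frac{(1+H)^{n+1}(1+h)^{r+1}}{1+dH+h}\cap\bigl((dH+h)\cap[W]\bigr)\right)=r\,\csm(\Pbb^n)\,,
\]
because the remaining summand pushes forward to exactly the right-hand side of~\eqref{eq:mainprfor}. This last identity I would check by a direct computation: $\pi_*$ reads off the coefficient of $h^r$, and writing $\tfrac{dH+h}{1+dH+h}=1-\tfrac{1}{1+dH+h}$ one finds that this coefficient equals $r(1+H)^{n+1}+(1+H)^{n+1}\tfrac{(dH)^{r+1}}{(1+dH)^{r+1}}$. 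The hypothesis $r\ge n$ forces $H^{r+1}=0$ on $\Pbb^n$, so the second term drops out and one is left with $r(1+H)^{n+1}\cap[\Pbb^n]=r\,\csm(\Pbb^n)$. This is the one place where $r\ge n$ enters, and it is genuinely needed: for $r<n$ the term involving $(dH)^{r+1}$ survives and the stated formula acquires a correction.
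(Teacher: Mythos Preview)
Your proposal is correct and follows essentially the same route as the paper: the paper introduces the auxiliary hypersurface $\cX$ (your $\mathscr Z$) in $\Pbb(E^\vee)$, shows that $\cY$ is its singularity subscheme, applies the hypersurface formula of~\cite{MR2001i:14009}, computes $\pi_*\csm(\cX)$ by the same fiber-by-fiber constructible function argument you give, computes $\pi_*\cvir(\cX)$ by the intersection-theoretic identity you sketch, and then uses $r\ge n$ to kill the term $\ctop(E)/c(E)$. The only organizational difference is that the paper carries this out for an arbitrary nonsingular ambient~$V$ and bundle~$E$ (obtaining Theorem~\ref{thm:main}) and then specializes to $V=\Pbb^n$, $E=\cO(d)^{\oplus(r+1)}$, whereas you work in the projective case directly; the content is the same.
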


\noindent (This statement uses the notation $\otimes$, ${}^\vee$ introduced 
in~\cite[\S2]{MR96d:14004}. We will recall this notation in~\S\ref{ss:tmt}.)
For instance, the degree of the class on the right-hand side equals the Euler characteristic
of $X$.

Note that the choices of the integer $d\gg 0$ and of the generators $F_i$ are arbitrary.
In particular, we could choose some of the $F_i$ to coincide, or even to be $0$, in order
to guarantee that $r\ge n$. Every such choice leads to an expression for the CSM class
of $X$.

The main result we will present in~\S\ref{s:stat} (Theorem~\ref{thm:main}) will pose no 
restriction on the number~$r$ 
of generators of a defining ideal for $X$. The case $r\ge n$ leads to a direct formula for 
the CSM class of $X$, of which Theorem~\ref{thm:mainpr} is a particular case. Another 
case of interest is $r+1=\codim X$, i.e., the case of a global complete intersection. Recall 
that the {\em Milnor class\/} of a complete intersection $X$ is the (signed) difference of 
its CSM class and of the Chern class of the virtual tangent bundle of $X$:
\begin{equation}\label{eq:milnordef}
\cM(X) = (-1)^{\dim X}\left(\cvir(X)-\csm(X)\right)\quad.
\end{equation}
(See~e.g., \cite{MR2002g:14005}. To our knowledge, this terminology is due to 
S.~Yokura, \cite{MR1695362, MR1720876}.)

\begin{theorem}\label{thm:mainprci}
Let $\iota: X\hookrightarrow \Pbb^n$ be a complete intersection of $r+1$ hypersurfaces 
of degree~$d$. Then with notation as above{\small
\begin{equation}\label{eq:mainprforci}
(-1)^{\dim X+1}\iota_*\cM(X) = \pi_*\left(\frac{(1+H)^{n+1} (1+h)^{r+1}}{1+dH+h}
\left(s(\cY, \Pbb^n\times \Pbb^r)^\vee \otimes_{\Pbb^n\times \Pbb^r}\cO(dH+h)\right)\right)\quad.
\end{equation}}
\end{theorem}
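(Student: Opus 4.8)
The right-hand side of~\eqref{eq:mainprforci} is exactly the expression appearing in Theorem~\ref{thm:mainpr}, which identifies it with $\iota_*\csm(X)$ whenever the chosen presentation of the ideal of~$X$ uses at least $n+1$ generators. Since a complete intersection $X$ of codimension $r+1$ in~$\Pbb^n$ has $r+1\le n$, Theorem~\ref{thm:mainpr} does not apply to the presentation $F_0,\dots,F_r$, and the plan is to recover the difference by comparison with a padded presentation. Adjoin $n-r$ trivial generators $F_{r+1}=\cdots=F_n\equiv 0$, viewed as forms of degree~$d$: this changes neither the ideal nor the scheme~$X$, but replaces $\Pbb^r$ by~$\Pbb^n$ and $\cY$ by the subscheme $\cY'\subseteq\Pbb^n\times\Pbb^n$ cut out by the \emph{same} polynomials $\bigl(F_0,\dots,F_r\bigr)+\bigl(\sum_{j=0}^{r}y_j\,\partial F_j/\partial x_i\bigr)_{i=0,\dots,n}$ (the adjoined partials vanish identically), now living in the larger product. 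Applying Theorem~\ref{thm:mainpr} to this presentation, whose number of generators is $n+1$, yields
\begin{equation*}
\iota_*\csm(X)=\pi'_*\!\left(\frac{(1+H)^{n+1}(1+h')^{n+1}}{1+dH+h'}\Bigl(s(\cY',\Pbb^n\times\Pbb^n)^\vee\otimes_{\Pbb^n\times\Pbb^n}\cO(dH+h')\Bigr)\right),
\end{equation*}
with $h'$, $\pi'$ referring to the enlarged second factor. Since $(-1)^{\dim X+1}\cM(X)=\csm(X)-\cvir(X)$ by~\eqref{eq:milnordef}, the theorem is equivalent to the claim that this last expression exceeds the right-hand side of~\eqref{eq:mainprforci} by exactly $\iota_*\cvir(X)$.

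To see this I would analyse the geometry of $\cY'$. Identify $\Pbb^r\subseteq\Pbb^n$ with $\{y_{r+1}=\cdots=y_n=0\}$, with complementary center $L:=\{y_0=\cdots=y_r=0\}\cong\Pbb^{\,n-r-1}$ and linear projection $p\colon\Pbb^n\dashrightarrow\Pbb^r$ from~$L$. As the defining ideal of $\cY'$ involves only $y_0,\dots,y_r$, one checks directly that $\cY'\cap(\Pbb^n\times\Pbb^r)=\cY$, that $\cY'\cap(\Pbb^n\times L)=X\times L$, and that over the complement of $\Pbb^n\times L$ one has $\cY'=(\mathrm{id}\times p)^{-1}\cY$; hence, set-theoretically, $\cY'$ is the union of the closure of $(\mathrm{id}\times p)^{-1}\cY$ — which fiberwise over the singular scheme of $X$ is the cone on $\cY$ with vertex $L$ — together with the excess piece $X\times L$, the two agreeing over the singular scheme of~$X$. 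Now $X\times L=V(F_0,\dots,F_r,y_0,\dots,y_r)$ is a complete intersection in $\Pbb^n\times\Pbb^n$, so its Segre class, and thus its contribution to the formula, depends only on the divisor classes $dH$ and $h'$; that contribution, computed once and for all, is forced to equal $\iota_*\cvir(X)=\dfrac{(1+H)^{n+1}(dH)^{r+1}}{(1+dH)^{r+1}}\cap[\Pbb^n]$ by evaluating in the case of a \emph{smooth} complete intersection, where $\cY=\varnothing$ and $\csm(X)=\cvir(X)$. It then remains to show that the \emph{rest} of $s(\cY',\Pbb^n\times\Pbb^n)^\vee\otimes\cO(dH+h')$ — the part accounting for the closure of $(\mathrm{id}\times p)^{-1}\cY$ — pushes forward under $\pi'_*$ to the right-hand side of~\eqref{eq:mainprforci}; for this I would pass to the blow-up of $\Pbb^n\times\Pbb^n$ along $\Pbb^n\times L$, on which $\mathrm{id}\times p$ becomes a morphism with projective-space fibers, and transport the computation to $\Pbb^n\times\Pbb^r$ by the birational functoriality of Segre classes and the residual-intersection formalism of~\cite{MR96d:14004}.

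The main obstacle is precisely this Segre-class bookkeeping: $\cY'$ genuinely carries the spurious component $X\times L$ alongside the proper transform of the cone on~$\cY$, Segre classes are not additive over components, and the two pieces overlap over the singular scheme of~$X$, so one must verify that after blowing up along $\Pbb^n\times L$ the two contributions separate cleanly — equivalently, that the residual scheme of $X\times L$ in $\cY'$ produces no spurious excess beyond the above. Granting this, the remaining steps (the padding reduction, the identification of the $X\times L$-contribution with $\iota_*\cvir(X)$, and the final manipulations with $\otimes$ and ${}^\vee$ via the formulas recalled in~\S\ref{ss:tmt}) are formal. I expect the cleaner route pursued in the paper to be instead a direct corollary of the general Theorem~\ref{thm:main}: set $r+1=\codim X$ there and recognize the resulting correction term as $\iota_*\cvir(X)$, using the local-complete-intersection identity $s(X,\Pbb^n)=c(N_X\Pbb^n)^{-1}\cap[X]$.
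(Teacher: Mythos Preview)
Your closing guess is exactly what the paper does. Theorem~\ref{thm:mainprci} is obtained in~\S\ref{s:stat} as the specialization to $V=\Pbb^n$, $E=\cO(d)^{\oplus(r+1)}$ of the second part of Corollary~\ref{cor:main}, itself an immediate consequence of Theorem~\ref{thm:main}: when $r+1=\codim X$ and the section is regular, $E|_X\cong N_X\Pbb^n$ and $\ctop(E)\cap[\Pbb^n]=\iota_*[X]$, so the correction term $\frac{c(TV)}{c(E)}\ctop(E)\cap[V]$ in~\eqref{eq:mainfor} is precisely $\iota_*\cvir(X)$, and the left-hand side becomes $(-1)^{\dim X+1}\iota_*\cM(X)$. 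No padding, no comparison of presentations, no residual Segre-class computation.

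Your main strategy---pad to $n+1$ generators, apply Theorem~\ref{thm:mainpr}, then account geometrically for the discrepancy between $\cY'$ and $\cY$---is a genuinely different route. Note, though, that in the paper's architecture Theorem~\ref{thm:mainpr} is \emph{also} a corollary of Theorem~\ref{thm:main} (the first part of Corollary~\ref{cor:main}); so once you are entitled to invoke it you already have Theorem~\ref{thm:main} in hand, and Theorem~\ref{thm:mainprci} follows in one line as above. Within this framework the comparison argument is redundant.

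If instead you want the comparison to stand on its own, the obstacle you flag is real and not yet overcome. Your smooth-case calibration tells you what the $X\times L$ piece \emph{should} contribute, but it does not prove that after blowing up $\Pbb^n\times L$ the residual of $X\times L$ in $\cY'$ has Segre class matching that of $\cY$ in $\Pbb^n\times\Pbb^r$; the components overlap over the singular locus of $X$, Segre classes are not additive, and the residual-intersection manipulation would have to be carried out in full. This is a substantial computation left undone rather than a fatal error, but the paper's route avoids it entirely.
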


It is worth stressing that the right-hand sides in~\eqref{eq:mainprfor} 
and~\ref{eq:mainprforci} are {\em identical.\/} The claim is that for $r\gg 0$ this formula yields 
the CSM class of $X$, while if $X$ is a complete intersection of $r+1$ hypersurfaces of a fixed 
degree $d$, the same formula yields the Milnor class of $X$ (up to a sign).

\subsection{}\label{ss:exa}
The formulas stated above can be implemented easily in Macaulay2 (\cite{M2}),
using the Segre class function in the package {\tt CharacteristicClasses.m2} by M.~Helmer
and C.~Jost (\cite{HJ}): this package can handle Segre classes of subschemes of products 
of projective spaces, and computing the push-forward amounts to simply extracting the 
coefficient of $h^r$.
The same package also implements the computation of CSM classes (by the 
inclusion-exclusion method mentioned above), so it may be used as an independent verification
of results obtained applying Theorems~\ref{thm:mainpr} and~\ref{thm:mainprci}.
We illustrate the application of Theorem~\ref{thm:mainprci} to the complete intersection
of the singular hypersurfaces 
\[
Z_1: \{x_1x_2x_3=0\}\quad,\quad Z_2: \{x_0 x_1^2 +x_2^3=0\}
\]
in $\Pbb^6$. The scheme $X=Z_1\cap Z_2$ consists of three components of codimension~$2$,
one of which (supported on a linear subspace) is nonreduced. The following Macaulay2 session 
implements the computation of the Segre class $s(\cY,\Pbb^6\times \Pbb^1)$ for the scheme 
$\cY$ associated with $X$. (We omit inessential output.)

{\tiny\begin{verbatim}
i1 : load ("CharacteristicClasses.m2");

i2 : R=MultiProjCoordRing({6,1});

i3 : r= gens R

o3 = {x , x , x , x , x , x , x , x , x }
       0   1   2   3   4   5   6   7   8

i4 : Y=ideal(r_1*r_2*r_3,r_1^2*r_0+r_2^3,r_8*r_1^2,r_7*r_2*r_3+r_8*2*r_0*r_1,r_7*r_1*r_3+r_8*3*r_2^2,r_7*r_1*r_2)

                       2    3   2                                 2
o4 = ideal (x x x , x x  + x , x x , x x x  + 2x x x , x x x  + 3x x , x x x )
             1 2 3   0 1    2   1 8   2 3 7     0 1 8   1 3 7     2 8   1 2 7

i5 : Segre(Y)

         6         6       5        5      4        4      3      3     2      2
o5 = 181h h  - 240h  - 167h h  + 72h  + 69h h  - 16h  - 19h h  + h  + 3h h  + h
         1 2       1       1 2      1      1 2      1      1 2    1     1 2    1
\end{verbatim}}

\noindent
Here $H={\tt h_1}$ and $h={\tt h_2}$. The result is that
\begin{multline*}
s(Y,\Pbb^6\times \Pbb^1) = (H^2 + 3H^2 h + H^3 -19 H^3 h -16 H^4 \\
+69 H^4 h + 72 H^5-167 H^5 h -240 H^6 + 181 H^6 h)\cap [\Pbb^6\times\Pbb^1]
\end{multline*}
(after push-forward to the ambient space). It is then straightforward to compute
\begin{multline*}
\frac{(1+H)^7 (1+h)^2}{1+3H+h}
\left(s(\cY, \Pbb^6\times \Pbb^1)^\vee \otimes_{\Pbb^6\times \Pbb^1}\cO(3H+h)\right) \\
=(H^2-3H^3+H^4-17H^5+42H^6) - (4H^2-9H^3+29H^4-107H^5+363H^6)h\quad.
\end{multline*}
According to Theorem~\ref{thm:mainprci},
\[
\iota_*\cM(X) = \left(4H^2-9H^3+29H^4-107H^5+363H^6\right)\cap [\Pbb^6]
\]
is the Milnor class of $X$. Since $X$ is a complete intersection of two
hypersurfaces of degree~$3$,
\[
\iota_* \cvir(X) =\frac{(1+H)^7}{(1+3H)^2}\cap [\Pbb^6] = 
\left(9H^2+9H^3+54H^4-90H^5+369H^6\right)\cap \Pbb^6\quad.
\]                                                                                                                                              
It follows that
\[
\iota_*\csm(X) = \iota_*(\cvir(X)-(-1)^{\dim X}\cM(X)) 
= (5H^2+18H^3+25H^4+17H^5+6H^6)\cap [\Pbb^6]\quad.
\]
(This can be confirmed independently by~\cite{HJ}.)

Note that $Z_1$ and $Z_2$ are both singular, and their singular loci have nonempty
intersection. It follows that in this example the complete intersection $X$ cannot be
represented as a hypersurface in a nonsingular subvariety of $\Pbb^6$; therefore
it does not satisfy the hypotheses of~\cite{MR3165594} and~\cite{MR3659416}.

The requirement in Theorem~\ref{thm:mainprci} that the degrees of the defining 
hypersurfaces coincide leads to the particularly explicit formula~\eqref{eq:mainprforci}.
The more general result presented in~\S\ref{s:stat} (Corollary~\ref{cor:main}) will
dispense of this requirement; an expression for the Milnor class will be obtained
for every local complete intersection represented as the zero-scheme of a regular
section of a vector bundle on a nonsingular variety.

\subsection{}
The paper is organized as follows. In \S\ref{s:stat} we provide a full statement of the main
result (Theorem~\ref{thm:main}) and give several illustrating examples, including the 
derivation of Theorem~\ref{thm:mainpr} and Theorem~\ref{thm:mainprci}.
In \S\ref{s:proof} we prove the main result.
The proof relies on the hypersurface case given in~\cite{MR2001i:14009}, on 
calculus of constructible functions, and on intersection-theoretic computations. 
A key ingredient in the proof is the construction of an auxiliary hypersurface, an idea we 
borrow from~\cite{miclalci}. As we show in~\S\ref{ss:alter}, the scheme~$\cY$ is the 
singularity subscheme of this hypersurface. In~\cite{miclalci} this hypersurface is 
constructed in the local complete intersection case, and it is also used to obtain formulas 
for Milnor classes (see~\S\ref{ss:compa}). 
We note here that the scheme $\cY$ was also considered by T.~Ohmoto (\cite{Ohm})
and X.~Liao (\cite{Liao}).

As an application of the main result, we expand in \S\ref{s:zeta} on the case of subschemes 
of projective space. Recent results on Segre classes lead to alternative, and in some way
more efficient, formulations of the result in this case.

\subsection{}
The result implies that expressions such as~\eqref{eq:mainprfor} (or the more general
version~\eqref{eq:mainfor} given in~\S\ref{s:stat}) are independent of the choices:
in the case of subschemes $X$ of $\Pbb^n$ these choices are the degree $d\gg 0$ of 
the generators of a defining ideal, the number $r\ge n$ of generators, or in fact the 
generators themselves. We do not know a more direct 
proof of this independence. In fact, $X$ may be replaced by {\em any\/} scheme with the 
same support as $X$ without affecting these expressions. While this fact is an immediate
consequence of the main result, it seems quite nontrivial in itself. 

We illustrate this fact with an example. Let $X$
be the scheme with ideal $(x^2,xy)$ in $\Pbb^2$; so $X$ is supported on a line $\Pbb^1$,
with an embedded component at the point $x=y=0$. We choose the generators $F_0=x^2$,
$F_1=xy$, $F_2=0$ for the ideal of $X$, which determine as described above the subscheme
$\cY$ of $\Pbb^2\times \Pbb^2$ with ideal
\[
(x^2,xy, 2y_0x+y_1y,y_1x)\quad.
\]
According to {\tt CharacteristicClass.m2},
\[
s(\cY,\Pbb^2\times \Pbb^2) = (Hh+H^2-Hh^2-2H^2h+3H^2h^2)\cap [\Pbb^2\times \Pbb^2]\quad;
\]
it follows that the class appearing on the right-hand side of~\eqref{eq:mainprfor} is
\[
(H^2+(H-H^2)h+(H+2H^2)h^2)\cap [\Pbb^2\times \Pbb^2]\quad.
\]
Performing the same computation using the reduced $\Pbb^1$, with generators 
$F_0=x,F_1=0,F_2=0$, yields the class
\[
((H+H^2)h+(H+2H^2)h^2)\cap [\Pbb^2\times \Pbb^2]\quad.
\]
The classes differ, but the coefficient of $h^2$, i.e., their push-forward to $\Pbb^2$, agree 
(and yield $\csm(\Pbb^1)=c(T\Pbb^1)\cap [\Pbb^1]$ as prescribed by
Theorem~\ref{thm:mainpr}). 

The results of this paper will prove that CSM classes of schemes $X$ with the same support 
agree as classes in the Chow group of every nonsingular variety containing $X$. It would be 
desirable to have a direct proof that classes obtained in this fashion are independent of all 
choices as classes in the Chow group $A_*X_\text{red}$.

\subsection{}
The formula for hypersurfaces in~\cite{MR2001i:14009} may be seen as a manifestation of 
an identity of characteristic cycles; see~\cite{MR2002g:14005} for this point of view and an 
alternative proof of the formula in~\cite{MR2001i:14009}. It is a natural project to provide a 
characteristic cycle version of the generalization obtained in this note.

To our knowledge, the hypersurface formula is not implied by the very general motivic
theory for characteristic classes of hypersurfaces, as presented in~\cite{MR2680178} 
(and extended to complete intersections in~\cite{MR3053711}). {\em A fortiori,\/} a
direct relation between the generalization presented here and the theory of motivic
Hirzebruch classes would be surprising and very interesting. Equally interesting would
be a connection with Yokura's `motivic Milnor classes' (\cite{MR2671765}).

Finally, we note that Fullwood and D.~Wang have proposed a conjectural generalization of the 
hypersurface formula (\cite{FullwoodWang}), in terms of a blow-up construction; they 
prove that this formulation is correct for certain complete intersections. It is straightforward
to express our results in this note in terms of the blow-up along the scheme $\cY$, as
this blow-up may be used to compute the Segre class of $\cY$. It would be
interesting to relate the center of the blow-up in~\cite{FullwoodWang} to $\cY$.
\smallskip

\subsection{Acknowledgments}
This work was supported in part by NSA grant H98230-16-1-0016.
This work was carried out while the author was visiting the University of Toronto.
The author thanks the University of Toronto for the hospitality.

%%%

\section{Statement}\label{s:stat}

\subsection{Preliminaries}\label{ss:CSM}
We work over an algebraically closed field $k$ of characteristic~$0$. (This requirement is
needed for Chern-Schwartz-MacPherson classes.) Throughout the paper, $X$ will denote
a $k$-scheme which can be embedded as a closed subscheme of a nonsingular variety~$V$.

The {\em Chern-Schwartz-MacPherson\/} (CSM) class of $X$ may be defined as an element
in the Chow group $A_*X$ of $X$. It is determined by the requirement that if $X$ is nonsingular,
then $\csm(X)=c(TX)\cap [X]$ and by a specific behavior with respect to proper morphisms,
which we now recall. 

We can associate with each $X$ the group of {\em constructible functions\/} $F(X)$, i.e., 
integer-valued functions on $X$ which may be obtained as finite linear combinations of
indicator functions on subvarieties of $X$: $\varphi=\sum_W \one_W$, where the sum
ranges over finitely many closed subvarieties $W$ of $X$, and $\one_W(p)=1$ if $p\in W$,
$\one_W(p)=0$ if $p\not\in W$. The assignment $X\leadsto F(X)$ defines a {\em covariant
functor\/} to the category of abelian groups, if we prescribe the following push-forward
for proper maps: if $f:X\to Y$ is a proper morphism, a homomorphism
$f_*: F(X)\to F(Y)$ is defined by requiring $f_*(\one_W)$ to be the function
\[
f_*(\one_W)(p)=\chi(f^{-1}(p)\cap W)\quad,
\]
where $\chi$ denotes the topological Euler characteristic if $k=\Cbb$, and a suitable analogue
over more general fields (see e.g., \cite[\S2.1]{MR3031565}).

According to a theorem of MacPherson (\cite{MR0361141}) and extensions of this result 
to the context used here, there exists a natural transformation from 
$F$ to the Chow group functor~$A_*$, such that the indicator function $\one_X$ is sent to 
$c(TX)\cap [X]$ if $X$ is nonsingular. The class $\csm(X)$ is the image of $\one_X$ in $A_*X$, 
regardless of the singularities of $X$. More generally, we denote by $\csm(\varphi)$ the image 
of $\varphi\in F(X)$ in $A_*X$. With this notation, if $f:X\to Y$ is a proper map, then
\[
\csm(f_*\varphi)=f_* \csm(\varphi)\quad.
\]
This covariance property implies easily that the natural transformation is unique: indeed, by 
resolution of singularities the CSM class of any scheme $X$ as above is determined by the 
CSM classes of a suitable selection of nonsingular varieties mapping to $X$.
Also note that if the Euler characteristic of the fibers of a proper morphism $f:X\to Y$ is
a constant $\chi$, then covariance implies that
\begin{equation}\label{eq:cova}
f_* \csm(X) = \chi\cdot \csm(Y)\quad.
\end{equation}

By abuse of language, if $X\subseteq V$, then we may denote by $\csm(X)$ the class 
$\csm(\one_X)$ in the Chow group {\em of $V$.\/} 
With this convention, the CSM class 
satisfies a basic inclusion-exclusion principle: for $X,Y\subseteq V$, we have
\[
\csm(X\cup Y) = \csm(X)+\csm(Y)-\csm(X\cap Y)\quad.
\]
This is often useful in concrete computations.

\subsection{The scheme $\cY$}\label{ss:TsY}
We now fix a nonsingular variety $V$, and a closed subscheme $X\subseteq V$.
We denote by $\iota$ the inclusion map $X\hookrightarrow V$.

We may view $X$ as the zero-scheme of a section of a vector bundle $E$. Indeed, we may
choose $E=\Spec(\Sym \cE)$, where $\cE$ is any locally free sheaf surjecting onto the
ideal sheaf $\cI_{X,V}$ of $X$ in $V$; the composition $s^\vee: \cE\twoheadrightarrow \cI_{X,V}
\hookrightarrow \cO_V$ corresponds to a section $s: V\to E$, such that $X=Z(s)$.
(Cf.~\cite[B.8.2]{85k:14004}.)

Recall that we have an exact sequence
\[
\xymatrix{
{\cI_{X,V}/\cI_{X,V}^2} \ar[r] & \Omega_V|_X \ar[r] & \Omega_X \ar[r] & 0
}
\]
(\cite[Proposition~8.12]{MR0463157}).
Restricting the surjection $\cE \twoheadrightarrow \cI_{X,V}$ to $X$ and composing with
the first morphism in this sequence determines a morphism of locally free sheaves
\[
\cE|_X \longrightarrow \Omega_V|_X\quad,
\]
or equivalently a morphism of vector bundles on $X$:
\begin{equation}\label{eq:morphEO}
\phi:\quad E^\vee|_X \longrightarrow T^*V|_X\quad.
\end{equation}
Now consider the projective bundle (of lines) $\rho: \Pbb(E^\vee|_X) \to X$.
Composing the pull-back of~\eqref{eq:morphEO} with the inclusion of the tautological 
subbundle, we obtain a morphism
\[
\sigma_{\cY}:\quad \cO(-1) \to \rho^*E^\vee|_X \to \rho^*T^*V|_X \quad.
\]
of vector bundles over $\Pbb(E^\vee|_X)$.

\begin{defin}
With notation as above, we define $\cY\subseteq \Pbb(E^\vee|_X)$ to be the zero-scheme
of~$\sigma_{\cY}$: $\cY=Z(\sigma_\cY)$.
\qede\end{defin}

Set-theoretically, $\cY$ consists of
points $(\underline e,x)$, with $\underline e$ in the fiber of $\Pbb(E^\vee|_X)$ at $x\in X$,
such that $\underline e\in \ker\phi_x$, where $\phi$ is the morphism in~\eqref{eq:morphEO}.

In local analytic coordinates $(x_1,\dots, x_n)$
for $V$ at $x$, $s^\vee$ describes the ideal of $X$ in terms of a choice of generators 
$f_0,\dots, f_r\in k[[x_1,\dots, x_n]]$, where $r+1=\rk E$.
The morphism $\phi$ is given by the $n\times (r+1)$ matrix
\[
\begin{pmatrix}
\frac{\partial f_0}{\partial x_1} & \cdots & \frac{\partial f_r}{\partial x_1} \\
\vdots & \ddots & \vdots \\
\frac{\partial f_0}{\partial x_n} & \cdots & \frac{\partial f_r}{\partial x_n}
\end{pmatrix}
\]
and $\cY$ is defined as a subscheme of $\Pbb(E^\vee|_X)$ by the vanishing
\begin{equation}\label{eq:basvan}
\begin{pmatrix}
\frac{\partial f_0}{\partial x_1} & \cdots & \frac{\partial f_r}{\partial x_1} \\
\vdots & \ddots & \vdots \\
\frac{\partial f_0}{\partial x_n} & \cdots & \frac{\partial f_r}{\partial x_n}
\end{pmatrix}\cdot
\begin{pmatrix}
e_0 \\ \vdots \\ e_r
\end{pmatrix} = 0
\end{equation}
with $\underline e = (e_0:\cdots :e_r)$.
Thus, $\cY$ detects linear relations among differentials of the chosen generators for $X$.

\begin{remark}
As Terry Gaffney pointed out, $\cY$ may therefore be viewed as a `Tyurina transform' 
associated with the morphism~$\phi$.
\qede\end{remark}

One source of such relations are the singularities of $X$. 

\begin{example}\label{ex:hype}
Assume that $r=0$, so that $X$ is the hypersurface in $V$ with local equation $f_0=0$. 
Then $\cY\subseteq X\times \Pbb^0 \cong X$ is locally defined by the vanishing of the 
partials of $f_0$; that is, in this case $\cY$ is the singularity subscheme of $X$. 
\qede\end{example}

In the case of Example~\ref{ex:hype}, $\cY$ is empty if $X$ is nonsingular. More generally,
$\cY$ is empty if $X$ is a smooth complete intersection realized as the zero-scheme of
a regular section of a bundle $E$ of rank equal to $\codim X$; in this case, $\phi_x$ has full 
rank for all $x\in X$.

However, this is not typical. Linear relations among differentials of the generators may be 
due to reasons other than the singularities of $X$. For example, two generators may coincide
or one of the generators may be identically $0$.

\begin{example}\label{ex:hypeag}
Let $X\subseteq V$ be a smooth hypersurface, given as the zero-scheme of a section~$f$ of
a line bundle $L$. With notation as above, let $E=L^{\oplus r+1}$, with $r>0$, and let 
$s=(f,\dots, f)$. Then $\cY$ is a $\Pbb^{r-1}$ bundle over $X$.
\qede\end{example}

\subsection{The main theorem}\label{ss:tmt}
Let $\cV=\Pbb(E^\vee)$, and let $\pi: \cV\to V$ denote the projection:
\[
\xymatrix{
\Pbb(E^\vee|_X) \ar[d]_{\rho} \ar@{^(->}[r] & \cV \ar[d]^\pi \\
X \ar@{^(->}[r] & V
}
\]
With this notation, $\Pbb(E^\vee|_X)=\pi^{-1}(X)$.

It will be useful to view $\cY$ as a subscheme of $\cV$; as such, the ideal of $\cY$ is 
generated by the pull-back of $\cI_{X,V}$ and by the relations~\eqref{eq:basvan}. 
The closed embedding $\cY\subseteq \cV$ determines the {\em Segre class\/}
$s(\cY,\cV)\in A_*\cY$ (\cite[Chapter~4]{85k:14004}). We implicitly often view this class
as a class in $A_*\cV$, omitting the evident push-forward notation.

We will need the following notation from~\cite[\S2]{MR96d:14004}. Let $M$ be an
ambient variety, and let $Z$ be a subscheme of $M$. Further, let $\cL$ be a line bundle
on $Z$. For $\alpha\in A_*Z$, write $\alpha = \sum_i \alpha^{(i)}$, where $\alpha^{(i)}$
is the component of $\alpha$ with codimension $i$ {\em in $M$.\/} We define
\[
\alpha\otimes_M \cL := \sum_i c(\cL)^{-i}\cap \alpha^{(i)} \quad, \quad
\alpha^\vee := \sum_i (-1)^i \alpha^{(i)}\quad.
\]
The subscript $M$ may be omitted in context (and the notation ${}^\vee$ must be
understood in context, since it also depends on the dimension of the ambient variety $M$). 
This notation satisfies simple compatibility properties with the notion of dual of vector
bundles and of tensor product of vector bundles by line bundles, in terms of their effect
on Chern classes. Further, it is an {\em action\/} in the sense that if $\cL_1$ and $\cL_2$ 
are line bundles on $Z$, then $\alpha\otimes (\cL_1\otimes \cL_2)
=(\alpha\otimes \cL_1)\otimes \cL_2$. (See~\cite[Propositions~1 and~2]{MR96d:14004}.)

The following is our main result.

\begin{theorem}\label{thm:main}
Let $V$ be a nonsingular variety, and let $\iota: X\to V$ be a closed subscheme. 
Assume $X=Z(s)$ for a section $s$ of a vector bundle $E$ on $V$, and construct
$\cY, \cV$ as above. Then{\small
\begin{equation}\label{eq:mainfor}
\iota_* \csm(X)-\frac{c(TV)}{c(E)}\ctop(E)\cap [V] =
c(TV)\cap \pi_* \left(
\frac{c(\pi^* E^\vee\otimes \cO(1))}{c(\cO(1))} \cap
\left(s(\cY,\cV)^\vee\otimes_{\cV} \cO(1)\right)
\right)\quad.
\end{equation}}
\end{theorem}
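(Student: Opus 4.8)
The plan is to reduce Theorem~\ref{thm:main} to the hypersurface case of \cite{MR2001i:14009} by constructing an auxiliary hypersurface in the projective bundle $\cV=\Pbb(E^\vee)$ whose singularity subscheme is exactly $\cY$. Concretely, the section $s$ of $E$ determines, via the tautological sequence on $\cV$, a section of $\pi^*E \otimes \cO(1)$; but a cleaner choice is to take the hypersurface $\cX\subseteq \cV$ cut out by the bilinear form that, in the local coordinates of \S\ref{ss:TsY}, reads $\sum_i y_i f_i$ (a section of $\pi^*\cO_V\otimes\cO(1)=\cO(1)$ twisted appropriately so as to be globally defined), or rather the hypersurface $\cX$ whose local equation is $\sum_i e_i f_i$ together with the observation that its partial derivatives with respect to the $x_j$ recover the relations~\eqref{eq:basvan} and its partials with respect to the $e_i$ recover the $f_i$. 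Thus $\cX$ is a hypersurface in the nonsingular variety $\cV$ whose singularity subscheme is precisely $\cY$ — this is the point flagged in \S\ref{ss:alter} and borrowed from \cite{miclalci}. First I would carry out this construction carefully, identify the line bundle $\cL$ of which the defining section of $\cX$ is a section (it will be $\cO(1)$ tensored with the pullback of a bundle on $V$ determined by $E$ and the common degree datum), and verify at the level of local generators that $\csm$ of $\cX$ is governed by $s(\cY,\cV)$ via the hypersurface formula
\[
\iota_{\cX*}\csm(\cX) = c(T\cV)\cap\left( s(\cX,\cV) + \bigl(s(\cY,\cV)^\vee \otimes_\cV \cL\bigr)\right)
\]
in the notation of \cite{MR2001i:14009, MR96d:14004}, where $\cL=\cO(\cX)$.

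The second step is a calculus-of-constructible-functions computation relating $\csm(\cX)$ on $\cV$ to $\csm(X)$ on $V$. The projection $\pi:\cV\to V$ restricts over $X$ to the projective bundle $\rho:\Pbb(E^\vee|_X)\to X$, and over the open complement $V\setminus X$ the section $s$ is nowhere zero, so $\cX$ meets each fiber $\pi^{-1}(v)\cong\Pbb^r$ in a hyperplane; over points of $X$ the fiber of $\cX$ is all of $\Pbb^r$. Pushing forward indicator functions, $\pi_*\one_\cX$ is therefore the constructible function equal to $\chi(\Pbb^{r-1})$ on $V\setminus X$ and $\chi(\Pbb^r)$ on $X$, i.e. $\pi_*\one_{\cX} = \chi(\Pbb^{r-1})\cdot\one_V + \one_X$ (using $\chi(\Pbb^r)-\chi(\Pbb^{r-1})=1$). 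By covariance of $\csm$, $\pi_*\csm(\cX) = \chi(\Pbb^{r-1})\,\csm(V) + \iota_*\csm(X)$, which isolates $\iota_*\csm(X)$ as $\pi_*\csm(\cX)$ minus an explicit multiple of $c(TV)\cap[V]$.

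The third step is to feed the hypersurface formula from Step~1 into the identity of Step~2 and massage the intersection-theoretic bookkeeping. One computes $\pi_*\bigl(c(T\cV)\cap(\cdots)\bigr)$ using the relative Euler sequence $0\to\cO\to\pi^*E\otimes\cO(1)\to T_{\cV/V}\to 0$, so that $c(T\cV)=\pi^*c(TV)\cdot c(\pi^*E\otimes\cO(1))/c(\cO(1))$; the term $s(\cX,\cV)$ contributes (after push-forward) exactly the correction $\frac{c(TV)}{c(E)}\ctop(E)\cap[V]$ on the left-hand side of~\eqref{eq:mainfor}, since $\cX$ is (generically over $V$) a hyperplane subbundle and $\pi_*s(\cX,\cV)$ can be evaluated by the projection formula; and the term $s(\cY,\cV)^\vee\otimes_\cV\cL$ produces the right-hand side once one checks that the twist by $\cL$ can be rewritten as the twist by $\cO(1)$ appearing in~\eqref{eq:mainfor}, using that $\cL$ and $\cO(1)$ differ by a pullback from $V$ together with the fact that $\csm(X)$ and the Segre class of $\cY$ see $E$ only through $\pi^*E$ on $\cV$. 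Here I would invoke the action property $\alpha\otimes(\cL_1\otimes\cL_2)=(\alpha\otimes\cL_1)\otimes\cL_2$ and the compatibility of ${}^\vee$ and $\otimes$ with Chern classes from \cite[Propositions~1, 2]{MR96d:14004} to move the pullback factors across $\pi_*$ via the projection formula.

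The main obstacle I expect is Step~3, specifically the precise identification of the line bundle $\cL=\cO(\cX)$ and the reconciliation of the twist $\otimes_\cV\cL$ in the hypersurface formula with the twist $\otimes_\cV\cO(1)$ and the factor $c(\pi^*E^\vee\otimes\cO(1))/c(\cO(1))$ that appear in~\eqref{eq:mainfor} — one must be scrupulous about which Chern-class normalizations the $\otimes$/${}^\vee$ operations refer to, since ${}^\vee$ depends on the codimension \emph{in the ambient variety} $\cV$ and this changes under push-forward along $\pi$. A subsidiary subtlety is checking that the auxiliary hypersurface $\cX$ is genuinely a hypersurface (i.e. that its defining section is not identically zero) and that its singularity subscheme is scheme-theoretically $\cY$ and not merely set-theoretically so; this requires comparing the ideal generated by all partials of $\sum_i e_i f_i$ on a chart of $\cV$ with the ideal $(f_0,\dots,f_r) + \eqref{eq:basvan}$, which is a direct local computation but must be done with care at points where some $f_i$ or some partial vanishes. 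Once these local and normalization issues are pinned down, the global formula~\eqref{eq:mainfor} follows by assembling Steps~1--3.
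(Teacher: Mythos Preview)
Your plan coincides with the paper's proof (\S\ref{s:proof}): construct the hypersurface $\cX\subseteq\cV$, show its singularity subscheme is $\cY$, compute $\pi_*\csm(\cX)$ via constructible functions and $\pi_*\cvir(\cX)$ directly, and subtract using the hypersurface formula of \cite{MR2001i:14009}. The obstacle you flag in Step~3 is not one: the defining section $\sigma_\cX$ is simply the composite $\cV\xrightarrow{\pi^*s}\pi^*E\xrightarrow{\epsilon}\cO(1)$ with $\epsilon$ dual to the tautological inclusion $\cO(-1)\hookrightarrow\pi^*E^\vee$, so $\cO(\cX)=\cO(1)$ on the nose and the twist in the hypersurface formula is already the $\cO(1)$-twist appearing in~\eqref{eq:mainfor} (also, the Euler sequence on $\Pbb(E^\vee)$ reads $0\to\cO\to\pi^*E^\vee\otimes\cO(1)\to T_{\cV/V}\to 0$, with $E^\vee$ rather than $E$).
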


The proof of Theorem~\ref{thm:main} is given in~\S\ref{s:proof}.
We record here the following consequence and several special cases illustrating
the statement.

\begin{corol}\label{cor:main}
With notation as above, let $\rk E=r+1$. Then
\begin{itemize}
\item If $r\ge \dim V$, then
\[
\iota_* \csm(X)= c(TV)\cap \pi_* \left(
\frac{c(\pi^* E^\vee\otimes \cO(1))}{c(\cO(1))} \cap
\left(s(\cY,\cV)^\vee\otimes_{\cV} \cO(1)\right)
\right)\quad.
\]
\item If $X$ is a local complete intersection in $V$ and $r+1=\codim X$, then
\[
\iota_* \cM(X)= (-1)^{\dim X+1} c(TV)\cap \pi_* \left(
\frac{c(\pi^* E^\vee\otimes \cO(1))}{c(\cO(1))} \cap
\left(s(\cY,\cV)^\vee\otimes_{\cV} \cO(1)\right)
\right)\quad.
\]
where $\cM(X)$ denotes the Milnor class of $X$.
\end{itemize}
\end{corol}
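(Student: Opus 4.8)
\textbf{Proof proposal for Corollary~\ref{cor:main}.}
The plan is to deduce both statements directly from Theorem~\ref{thm:main} by analyzing the dimensional content of the identity~\eqref{eq:mainfor}. First I would observe that everything on the right-hand side of~\eqref{eq:mainfor} is obtained by pushing forward classes on $\cV=\Pbb(E^\vee)$ via $\pi$, a $\Pbb^r$-bundle. Thus the push-forward $\pi_*$ annihilates all contributions of dimensional degree $<r$ along the fibers; in particular, the only part of $s(\cY,\cV)^\vee\otimes_\cV\cO(1)$ and of the factor $c(\pi^*E^\vee\otimes\cO(1))/c(\cO(1))$ that survives is the one picking out $c_1(\cO(1))^r$ (or higher powers, which vanish on a $\Pbb^r$-bundle beyond the Segre-class range). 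The key point for the first bullet is that when $r\ge\dim V$, the correction term $\frac{c(TV)}{c(E)}\ctop(E)\cap[V]$ on the left-hand side is forced to vanish: $\ctop(E)=c_{r+1}(E)$ has codimension $r+1>\dim V$ in $V$, so $\frac{c(TV)}{c(E)}\ctop(E)\cap[V]=0$ in $A_*V$. This immediately gives the first formula.

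For the second bullet, I would use the hypothesis that $X$ is a local complete intersection realized as the zero-scheme of a \emph{regular} section $s$ of a bundle $E$ with $\rk E=r+1=\codim X$. In this situation the virtual tangent bundle of $X$ is $T_{\text{vir}}X = TV|_X - E|_X$, so $\cvir(X)=\frac{c(TV)}{c(E)}\big|_X\cap[X]$, and by the self-intersection formula (\cite[Chapter~6]{85k:14004}) the push-forward $\iota_*\cvir(X)=\frac{c(TV)}{c(E)}\ctop(E)\cap[V]$ in $A_*V$, since $\iota_*[X]=\ctop(E)\cap[V]$ for a regular section and the normal bundle of $X$ in $V$ is $E|_X$. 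Hence the left-hand side of~\eqref{eq:mainfor} is precisely $\iota_*\csm(X)-\iota_*\cvir(X)=-\iota_*(\cvir(X)-\csm(X))=(-1)^{\dim X+1}\,\iota_*\cM(X)$ by the definition~\eqref{eq:milnordef} of the Milnor class. Substituting into~\eqref{eq:mainfor} yields the stated formula.

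The only genuinely delicate point is the identification $\iota_*\cvir(X)=\frac{c(TV)}{c(E)}\ctop(E)\cap[V]$ in the local complete intersection case; this requires that $E|_X$ really is the normal bundle $N_XV$, which is exactly what the regularity of the section $s$ guarantees (the Koszul complex on $s$ resolves $\cO_X$, so $\cI_{X,V}/\cI_{X,V}^2\cong E^\vee|_X$ and $X$ has the expected codimension $r+1$). One should also check that the fundamental class statement $\iota_*[X]=\ctop(E)\cap[V]$ holds at the level of Chow groups, which is standard for zero-loci of regular sections. With these remarks in place, both assertions follow by pure bookkeeping from Theorem~\ref{thm:main}, and no further intersection-theoretic input is needed.
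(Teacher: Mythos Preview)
Your proof is correct and follows essentially the same route as the paper: both bullets are immediate from Theorem~\ref{thm:main} once one observes that the term $\frac{c(TV)}{c(E)}\ctop(E)\cap[V]$ either vanishes for dimensional reasons (first bullet, since $c_{r+1}(E)=0$ when $r+1>\dim V$) or equals $\iota_*\cvir(X)$ (second bullet, via $E|_X\cong N_XV$ and $\iota_*[X]=\ctop(E)\cap[V]$). Your opening paragraph about what survives under $\pi_*$ is unnecessary---neither bullet requires any analysis of the right-hand side of~\eqref{eq:mainfor}---but it does no harm; the paper's own proof simply omits it.
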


\begin{proof}
If $r\ge \dim V$, then $\rk E>\dim V$, hence $\ctop(E)=0$ for dimensional reasons.
The first formula follows then immediately from Theorem~\ref{thm:main}.

Concerning the second formula: If $X$ is a local complete intersection, $X$ is the zero
scheme of a section of a vector bundle $E$, and $\rk E=\codim X$, then $E|_X\cong
N_XV$, and $(TV|_X)/(E|_X)$ is the virtual tangent bundle of $X$. Further, $\ctop(E)\cap [V]
=\iota_*[X]\in A_*V$. Therefore
\[
\frac{c(TV)}{c(E)}\ctop(E)\cap [V] = \iota_* \cvir(X)
\]
in this case. By definition of Milnor class \eqref{eq:milnordef}, we have
\[
\iota_* \csm(X)-\frac{c(TV)}{c(E)}\ctop(E)\cap [V] = \iota_*(\csm(X)-\cvir(X)) = 
(-1)^{\dim X+1}\iota_* \cM(X)\quad,
\]
and the second formula follows from Theorem~\ref{thm:main}.
\end{proof}

\begin{example}
Let $V=\Pbb^n$. Every $X\subseteq V$ may be defined by a homogeneous ideal generated by 
forms of degree $d$, if $d\gg 0$. Choose such a $d$, and choose generators $F_0,\dots, F_r$
of~$H^0(\Pbb^n, \cI_{X,\Pbb^n}(d))$. View $(F_0,\dots, F_r)$ as a section of 
$E=\cO(d)^{\oplus(r+1)}$. We have 
\[
\Pbb(E^\vee) = \Pbb(\cO(-dH)^{\oplus(r+1)}) \cong \Pbb^n\times \Pbb^r\quad,
\]
where $H$ denotes the hyperplane class in $\Pbb^n$ (and its pull-back). Denoting by $h$ 
the hyperplane class in $\Pbb^r$, we have $c_1(\cO_{\Pbb(E^\vee)}(1))=dH+h$. Therefore
\begin{multline*}
c(TV)\cap \pi_*\left(\frac{c(\pi^* E^\vee\otimes \cO(1))}{c(\cO(1))} \cap
\left(s(\cY,\cV)^\vee\otimes_{\cV} \cO(1)\right)\right) \\
=(1+H)^{n+1}\cap \pi_*\left(\frac{(1-dH+(dH+h))^{r+1}}{1+dH+h} \cap
\left(s(\cY,\cV)^\vee\otimes_{\cV} \cO(dH+h)\right)\right)\quad,
\end{multline*}
and the formulas in Corollary~\ref{cor:main} specialize to Theorems~\ref{thm:mainpr} 
and~\ref{thm:mainprci}.
\qede\end{example}

\begin{example}
Let $X$ be a hypersurface in $V$, given as the zero-scheme of a section $f$ of the line
bundle $E=\cO(X)$. As noted in~Example~\ref{ex:hype}, $\cY$ equals the singularity
subscheme $JX$ of $X$ in this case. We have $\cV=\Pbb(E^\vee)=\Pbb(\cO(-X))\cong V$,
and $\pi$ is the identity under this identification. The line bundle $\cO(1)$ is tautologically 
isomorphic to $\cO(X)$. Since $\rk E=\codim X$, the second formula in Corollary~\ref{cor:main} 
applies, giving
\[
\iota_* \cM(X)= (-1)^{\dim X} c(TV)\cap \left(
\frac{c(\cO(-X)\otimes \cO(X))}{c(\cO(X))} \cap
\left(s(JX,V)^\vee\otimes_V \cO(X)\right)
\right)\quad,
\]
that is,
\[
\iota_*(\csm(X)-\cvir(X)) = c(TV)\cap \left( c(\cO(X))^{-1} \cap
\left(s(JX,V)^\vee\otimes_V \cO(X)\right)
\right)\quad.
\]
This is the main result of~\cite{MR2001i:14009} (after push-forward by $\iota_*$ to the 
ambient nonsingular variety $V$).
\qede\end{example}

\begin{example}
To illustrate the dependence of the result on the rank of $E$ in a particularly transparent
case, let $X\subseteq V$ be a smooth hypersurface. We can view $X$ as the zero scheme
of a section of $E=\cO(X)^{\oplus(r+1)}$, of the form (for example) $(f,\dots,f)$. As observed
in Example~\ref{ex:hypeag}, $\cY\subseteq \Pbb(E^\vee|_X)$ is then a $\Pbb^{r-1}$ bundle
over $X$. We can identify $\cV=\Pbb(E^\vee)$ with $V\times \Pbb^r$; let $h$ be the pull-back
of the hyperplane class from the second factor, and $\pi$ the projection onto the first factor.
Then $\cO_{\Pbb(E^\vee)}(1)\cong \cO(h+\pi^*X)$, and $\cY$ is a complete intersection
of $\Pbb(E^\vee|_X)=\pi^{-1}(X)$ and a hyperplane in the second factor (with equation
$e_0+\cdots+e_r=0$). We have
\[
s(\cY,\cV) = \frac{h\cdot \pi^*X}{(1+h)(1+\pi^*X)}\cap [\cV]
\]
as a class in $A_*\cV$, hence
\[
s(\cY,\cV)^\vee\otimes_\cV \cO(1) = \frac{h\cdot \pi^*X}{(1-h)(1-\pi^*X)}\otimes_\cV \cO(h+\pi^*X)
\cap [\cV]= \frac{h\cdot \pi^*X}{(1+\pi^*X)(1+h)}\cap [\cV]\quad.
\]
(Using~\cite[Proposition~1]{MR96d:14004}.)
Therefore, omitting evident pull-backs,
\begin{multline*}
\frac{c(E^\vee\otimes_\cV \cO(1))} {c(\cO(1))}\cap \left(s(\cY,\cV)^\vee\otimes_\cV \cO(1) \right) 
=\frac{(1+h)^{r+1}}{1+h+X}\cdot \frac{h\cdot X}{(1+X)(1+h)}\cap [\cV] \\
=\frac{(1+h)^r}{(1+h+X)(1+X)}\cap [\cV]
=(1+h)^r\left(\frac{X}{1+X} - \frac{X}{1+h+X}\right)\cap [\cV]\quad.
\end{multline*}
The push-forward of this class to $\cV$ is determined by 
the coefficient of $h^r$ in this expression. It is easy to verify that this equals
\[
\left(\frac{X}{1+X}-\frac{X^{r+1}}{(1+X)^{r+1}}\right)\cap [V]
\]
and it follows that
\begin{multline*}
c(TV)\cap \pi_*\left(\frac{c(\pi^*E^\vee\otimes_\cV \cO(1))} {c(\cO(1))}\cap 
\left(s(\cY,\cV)^\vee\otimes_\cV \cO(1) \right) \right) \\
=c(TV)\cap \left(\frac{X}{1+X}-\frac{X^{r+1}}{(1+X)^{r+1}}\right)\cap [V]
=c(TX)\cap [X]-c(TV)\frac{c_1(\cO(X))^{r+1}}{c(\cO(X))^{r+1}}\cap [V]
\end{multline*}
in agreement with Theorem~\ref{thm:main}.
\qede\end{example}

\begin{example}\label{ex:caseV}
If $X=V$, we can represent $X$ as the zero-scheme of the zero-section of any vector bundle~$E$
on $V$. In this case $\cY=\cV$, so that
\[
s(\cY,\cV)^\vee \otimes_\cV \cO(1) = [\cV]^\vee \otimes \cV \cO(1) = [\cV]\quad.
\]
Theorem~\ref{thm:main} reduces then to the statement that
\begin{equation}\label{eq:seq0}
\pi_*\left(\frac{c(\pi^*E^\vee\otimes \cO(1))}{c(\cO(1))}\cap[\cV]\right)
=\left(1-\frac{\ctop(E)}{c(E)}\right)\cap [V] \quad.
\end{equation}
This statement will in fact be an ingredient in the proof of Theorem~\ref{thm:main}, and will
be (independently) proven in~\S\ref{ss:cFX}.
\qede\end{example}

%%%

\section{Proof}\label{s:proof}

\subsection{}
The proof of Theorem~\ref{thm:main} relies on several ingredients.
In~\S\ref{ss:alter} we give an alternative description of the scheme~$\cY$ defined in
\S\ref{ss:TsY}, as the singularity subscheme of a hypersurface~$\cX$ in $\Pbb(E^\vee)$. 
In~\S\ref{ss:cFX} we compute the push-forward of $\cvir(\cX)$, by standard techiques
in intersection theory. In~\S\ref{ss:calc} we compute the push-forward of~$\csm(\cX)$ by 
applying the functoriality of CSM classes, and in~\S\ref{ss:fine} we use the main result 
of~\cite{MR2001i:14009} to establish~Theorem~\ref{thm:main}.

In~\S\ref{ss:compa} we comment on related work of Callejas-Bedregal, Morgado, and Seade
concerning Milnor classes of local complete intersections (\cite{miclalci}). The 
hypersurface~$\cX$ we use in the proof of~Theorem~\ref{thm:main} was to our knowledge
first introduced in~\cite{miclalci} (in the local complete intersection case).

\subsection{Alternative description of $\cY$}\label{ss:alter}
With notation as in~\S\ref{ss:TsY}, dualize the inclusion of the tautological subbundle
$\cO(-1)\to \pi^*(E^\vee)$ to obtain a canonical morphism $\epsilon: \pi^*(E) \to \cO(1)$.
Composing with the pull-back of $s$ gives a section of $\cO(1)$ on $\cV$:
\begin{equation}\label{eq:scX}
\xymatrix{
\sigma_\cX:\quad \cV \ar[r]^-{\pi^* s} & \pi^*(E) \ar[r]^\epsilon & \cO(1)\quad.
}
\end{equation}
We let $\cX$ denote the hypersurface of $\cV$ defined as the zero-scheme of
$\sigma_\cX=\epsilon\circ \pi^*s$.

\begin{lemma}\label{lem:singy}
The scheme $\cY$ is the singularity subscheme of $\cX$.
\end{lemma}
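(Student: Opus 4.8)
The plan is to prove the equality $\cY = J\cX$ of subschemes of $\cV$ by a direct local computation, after first recording why the statement even makes sense: since $V$ is nonsingular and $E$ is locally free, $\cV=\Pbb(E^\vee)$ is nonsingular, so the singularity (jacobian) subscheme $J\cX$ of the hypersurface $\cX\subseteq\cV$ is defined, being cut out locally by a defining equation of $\cX$ together with its partial derivatives in any system of local coordinates on $\cV$. I would then identify this local ideal with the local ideal defining $\cY\subseteq\cV$, namely the one generated by the pull-back of $\cI_{X,V}$ and by the relations \eqref{eq:basvan}. For $X\neq V$ the section $\sigma_\cX$ is not identically zero (it vanishes identically on $\cV$ only if $s\equiv 0$), so $\cX$ is a genuine hypersurface; the degenerate case $X=V$, where $\cX=\cV=\cY$, is immediate and can be disposed of first.

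Concretely, I would fix a point of $V$, local coordinates $x_1,\dots,x_n$ there, and a local frame for $E$ in which the section $s$ is $(f_0,\dots,f_r)$, so that $X=V(f_0,\dots,f_r)$ locally. The dual frame of $E^\vee$ induces homogeneous fiber coordinates $e_0,\dots,e_r$ on $\cV$ as in \S\ref{ss:TsY}, and, dualizing $\cO(-1)\hookrightarrow\pi^*E^\vee$, the map $\epsilon\colon\pi^*E\to\cO(1)$ of \eqref{eq:scX} carries the $j$-th frame section of $E$ to the tautological degree-one section "$e_j$" of $\cO(1)$. Hence $\sigma_\cX=\epsilon\circ\pi^*s$ is the section $\sum_j e_j f_j$ of $\cO(1)$, and in the affine chart $e_0\neq 0$ of the fiber, writing $t_j=e_j/e_0$, the hypersurface $\cX$ has local equation $G=f_0+t_1f_1+\cdots+t_rf_r$.

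The jacobian ideal of $\cX$ in this chart is then read off at once. For $j=1,\dots,r$ one has $\partial G/\partial t_j=f_j$, and since $G-\sum_{j\ge 1}t_jf_j=f_0$, the ideal $(G,\partial G/\partial t_1,\dots,\partial G/\partial t_r)$ equals $(f_0,f_1,\dots,f_r)$, i.e.\ the pull-back of $\cI_{X,V}$. On the other hand $\partial G/\partial x_i=\partial f_0/\partial x_i+\sum_{j\ge 1}t_j\,\partial f_j/\partial x_i$, which, after clearing the trivializing factor $e_0$, is exactly the $i$-th relation in \eqref{eq:basvan}. Therefore the jacobian ideal of $\cX$ in this chart is the pull-back of $\cI_{X,V}$ together with the relations \eqref{eq:basvan} — precisely the ideal of $\cY$. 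The identical computation applies in each chart $e_k\neq 0$, and these cover $\cV$; since $\cY$ and $J\cX$ are both intrinsically defined closed subschemes of $\cV$, the local identifications patch to the asserted equality.

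I expect the only genuine obstacle to be the bookkeeping behind that final patching: one must know that the local jacobian ideal of $\cX$ is independent of the auxiliary choices (the local frame for $E$, the coordinates on $V$, the affine chart of the fiber), so that $J\cX$ is a well-defined subscheme of $\cV$ that one is entitled to compare with $\cY$. This is the standard fact that, for a section of a line bundle on a smooth variety, two local trivializations produce partials that differ by a multiple of the section itself, so the ideal generated by the section together with its partials is unchanged; granting it, the lemma is exactly the chart computation above. A cleaner way to organize the same argument — which I would also mention — is to split the partials of $G$ into the "vertical" ones along the fibers of $\pi$, which recover the pull-back of $\cI_{X,V}$ (geometrically, $\cX$ meets a fiber $\pi^{-1}(p)\cong\Pbb^r$ in a hyperplane that degenerates precisely over $X$), and the "horizontal" ones along $V$, which restrict on $\pi^{-1}(X)=\Pbb(E^\vee|_X)$ to the section $\sigma_\cY$ of \S\ref{ss:TsY}; because the pull-back of $\cI_{X,V}$ is part of the ideal in any case, it is immaterial whether one takes these horizontal derivatives on all of $\cV$ or only their restrictions to $\pi^{-1}(X)$, and $J\cX=\cY$ follows.
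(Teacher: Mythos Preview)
Your proof is correct and follows essentially the same approach as the paper. The paper first records the local computation (equation $y_0f_0+\cdots+y_rf_r=0$ and its Jacobian) in one sentence, then spells out an intrinsic version via the relative cotangent sequence of $\cV/V$---precisely your ``vertical/horizontal'' splitting of the partials; you have simply inverted the emphasis, writing out the chart computation carefully and sketching the intrinsic one.
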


\begin{proof}
In local analytic coordinates as above, $\cX$ is given by the equation
\begin{equation}\label{eq:eqcX}
y_0 f_0 +\cdots + y_rf_r=0\quad,
\end{equation}
whose Jacobian ideal defines the singularity subscheme of $\cX$. From this and the coordinate
description of $\cY$ given in~\S\ref{ss:TsY}, the statement is clear. More intrinsically, the ideal 
of $\cX$ in~$\cV$ is $\cO(-1)$; hence we have a canonical morphism
\[
\xymatrix{
\cO(-1)|_\cX \ar[r] & \Omega_{\cV}|_{\cX}\quad,
}
\]
and equivalently (tensor by $\cO(1)$) a section
\[
\xymatrix{
\sigma_{J\cX}:\quad \cX \ar[r] &  T^*\cV|_\cX\otimes \cO(1)
}
\]
of a twist of the cotangent bundle to $\cV$.
By definition, the singularity subscheme of $\cX$ is the zero-scheme of this section.
Now, we have the dual Euler exact sequence
\[
\xymatrix{
0 \ar[r] & (T^*_{\cV/V})\otimes \cO(1) \ar[r] &
\pi^*E \ar[r]^\epsilon & \cO(1) \ar[r] & 0 \\
& & \cV \ar[u]^{\pi^* s} \ar[ur]_{\sigma_\cX}
}
\]
where $T^*_{\cV/V}$ is the relative cotangent bundle. As $\cX$ is the zero-scheme of $\sigma_\cX$, 
we obtain a section
\[
\sigma':\quad \cX \to (T^*_{\cV/V})\otimes \cO(1)|_\cX
\]
which is seen to be compatible with $\sigma_{J\cX}$: the diagram
\[
\xymatrix{
0 \ar[r] & \pi^* T^*V\otimes \cO(1)|_\cX \ar[r] & T^*\cV\otimes \cO(1)|_\cX \ar[r] &
(T^*_{\cV/V})\otimes \cO(1)|_\cX \ar[r] & 0 \\
& & \cX \ar[u]^{\sigma_{J\cX}} \ar[ur]_{\sigma'}
}
\]
commutes. The singularity subscheme of $\cX$, i.e., the zero-scheme $Z(\sigma_{J\cX})$, 
is contained in $Z(\sigma')=Z(\pi^*s)=\pi^{-1}(X)=\Pbb(E^\vee|_X)$. 
Restricting to $\Pbb(E^\vee|_X)$, $\sigma_{J\cX}$ induces a section
\[
\xymatrix{
\sigma'':\quad \Pbb(E^\vee|_X) \ar[r] & (\pi^* T^*V\otimes \cO(1))|_{\Pbb(E^\vee|_X)}
=\rho^* T^*V|_X\otimes \cO(1)\quad,
}
\]
such that the singularity subscheme of $\cX$ equals $Z(\sigma'')$. It is now easy
to check that $\sigma''$ agrees with $\sigma_\cY\otimes \cO(1)$, and it follows that
the singularity subscheme of $\cX$ coincides with $Z(\sigma_{\cY})=\cY$.
\end{proof}

\subsection{}\label{ss:cFX}
Theorem~\ref{thm:main} will follow from the computation of push-forwards of characteristic 
classes of~$\cX$. In this subsection we compute $\pi_*(\cvir(X))$. For this purpose it will be
useful to prove identity~\eqref{eq:seq0}: as noted in~\S\ref{ex:caseV}, this simple statement is
a particular case of Theorem~\ref{thm:main}, and it turns out that it is in fact one of the 
ingredients in its proof.

\begin{lemma}\label{lem:pfE1}
Let $E$ be a vector bundle on a variety $V$, and let $\pi: \Pbb(E^\vee)\to V$ be the projective 
bundle (of lines) of its dual $E^\vee$. Then
\[
\pi_*\left(\frac{c(\pi^*E^\vee\otimes \cO(1))}{c(\cO(1))}\cap[\Pbb(E^\vee)]\right)
=\left(1-\frac{\ctop(E)}{c(E)}\right)\cap [V] \quad.
\]
\end{lemma}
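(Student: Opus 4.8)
The plan is to compute both sides directly using the projective bundle formalism and the standard Segre/Chern class relations for $\Pbb(E^\vee)$. Let $r+1 = \rk E$, let $\xi = c_1(\cO_{\Pbb(E^\vee)}(1))$, and recall that $\pi_*(\xi^j \cap [\Pbb(E^\vee)])$ vanishes for $j < r$ and equals $[V]$ for $j = r$; more generally $\pi_*(\xi^{r+i}\cap [\Pbb(E^\vee)]) = s_i(E^\vee)\cap [V]$, where $s(E^\vee) = c(E^\vee)^{-1}$ is the Segre class of $E^\vee$ in the sense of \cite[Chapter~3]{85k:14004}. (One must be mildly careful with sign conventions: the defining relation for $\Pbb(E^\vee)$ of lines is that $\cO(-1) \hookrightarrow \pi^*E^\vee$, so $\sum_{i} \pi^*c_i(E^\vee)\,\xi^{r+1-i} = 0$, which gives exactly the Segre relation above.)

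First I would expand the integrand. Using the splitting principle, write $E^\vee$ with Chern roots $-a_1,\dots,-a_{r+1}$, so $E$ has roots $a_1,\dots,a_{r+1}$. Then $c(\pi^*E^\vee\otimes\cO(1)) = \prod_{k}(1 + \xi - a_k)$, and the integrand is
\[
\frac{\prod_{k=1}^{r+1}(1+\xi-a_k)}{1+\xi}\cap[\Pbb(E^\vee)]\quad.
\]
The numerator is a polynomial in $\xi$ of degree $r+1$ with coefficients pulled back from $V$; dividing by $1+\xi$ formally (as a power series in $\xi$, which is legitimate since only finitely many terms survive after capping with the fundamental class and pushing forward) produces a power series $\sum_{j\ge 0} P_j \xi^j$ with $P_j \in A_*V$. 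Applying $\pi_*$ and the projection formula kills all terms with $j < r$ and converts $\xi^{r+i}$ to $s_i(E^\vee)$. So the left-hand side equals $\sum_{i\ge 0} (\text{coefficient of }\xi^{r+i}\text{ in the power series})\cap s_i(E^\vee)\cap[V]$, which I would repackage as a single generating-function identity.

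The cleanest route to finish is to recognize that $\pi_*\bigl(\frac{Q(\xi)}{1+\xi}\bigr)$, for $Q(\xi) = \prod_k(1+\xi-a_k)$, can be evaluated by a residue/substitution trick: since $\pi_*(\xi^m\cap[\Pbb(E^\vee)])$ is the degree-$(m-r)$ part of $c(E^\vee)^{-1} = c(E)\cdot\frac{1}{\ctop(E)}\cdot(\text{correction})$ — more precisely one uses that $\sum_m \pi_*(\xi^m)\,t^{-m}$ behaves like $\frac{t^r}{\prod_k(t - a_k)}$ — the pushforward of any rational expression in $\xi$ is obtained by the formal substitution replacing $\frac{1}{\text{denominator in }\xi}$-type data by partial fractions over the roots $a_k$. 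Concretely, I expect the identity to reduce to the elementary fact that
\[
\frac{\prod_{k}(1 - a_k)}{\prod_{k} a_k} \;-\; \frac{1}{\prod_k a_k}\;+\;(\text{lower-order bookkeeping}) \;=\; 1 - \frac{\prod_k a_k}{\prod_k(1-a_k)} \cdot \frac{1}{\text{?}}
\]
— i.e., after the dust settles, $\pi_*$ of the integrand is $1 - \ctop(E)/c(E)$ because $c(E) = \prod_k(1+a_k)$ and $\ctop(E) = \prod_k a_k$, and the $\frac{1}{1+\xi}$ factor is precisely what interpolates between the constant term $1$ (coming from the $\xi^r$ coefficient) and the correction term. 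Rather than the residue language, in the write-up I would simply verify the identity by multiplying both sides by $c(\cO(1)) = 1+\xi$, i.e., show
\[
\pi_*\bigl(c(\pi^*E^\vee\otimes\cO(1))\cap[\Pbb(E^\vee)]\bigr) \;=\; (1+\xi)\text{-twisted version of } \Bigl(1 - \frac{\ctop(E)}{c(E)}\Bigr)\quad,
\]
but it is more honest to just grind the generating function: write $c(\pi^*E^\vee\otimes\cO(1)) = \sum_{i=0}^{r+1} \xi^{r+1-i} \,c_i'(E^\vee)$ where $c_i'$ are the coefficients after the twist (these are the Chern classes of the twisted bundle expressed via binomial combinations of $c_j(E^\vee)$ and $\xi$), and compare coefficients.

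The main obstacle I anticipate is purely bookkeeping: getting the sign and indexing conventions for $\Pbb(E^\vee)$-of-lines consistent (whether $\cO(1)$ restricts on fibers to $\cO_{\Pbb^r}(1)$ or its dual, and correspondingly whether the relevant Segre class is $s(E)$ or $s(E^\vee)$), and making sure the formal power series manipulations are justified by the vanishing of high-degree classes. Once the conventions are pinned down, the identity is a one-line consequence of the projective bundle formula $\pi_*(\xi^{r+i}) = s_i(E^\vee)$ combined with the algebraic identity $\sum_{i\ge0} s_i(E^\vee) = c(E^\vee)^{-1}$ and the elementary observation that dividing $\prod_k(1+\xi-a_k)$ by $(1+\xi)$ and extracting $\xi^{\ge r}$ coefficients reproduces exactly $1 - \ctop(E)/c(E)$. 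I would present it in that order: fix conventions, state the projective bundle pushforward formula, expand the twisted total Chern class, divide by $1+\xi$, push forward, and identify the answer.
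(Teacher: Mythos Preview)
Your plan is sound and uses the same underlying tool as the paper---the projective bundle push-forward $\pi_*(\xi^j\cap[\Pbb(E^\vee)])=0$ for $j<r$ and $=s_{j-r}(E^\vee)\cap[V]$ for $j\ge r$---but you are making the algebra harder than it needs to be, and parts of your sketch (the residue paragraph, the displayed formula with the ``?'') are not actually saying anything. The paper sidesteps all of the power-series bookkeeping you anticipate by invoking the elementary identity (cf.\ \cite[Remark~3.2.3(b)]{85k:14004})
\[
\frac{c(\pi^*E^\vee\otimes\cO(1))}{c(\cO(1))}
= c(\cO(1))^r + \sum_{i=1}^{r} c_i(\pi^*E^\vee)\,c(\cO(1))^{r-i} + \frac{c_{r+1}(\pi^*E^\vee)}{c(\cO(1))}\,,
\]
obtained by writing $\prod_k((1+\xi)-a_k)$ as a polynomial in $t=1+\xi$ and dividing by $t$. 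Now each piece pushes forward immediately: the first term gives $[V]$ (only $\xi^r$ survives), the middle sum dies (each summand involves only $\xi^j$ with $j<r$), and the last term gives $-c_{r+1}(E)\,c(E)^{-1}\cap[V]$ via the Segre-class definition $\pi_*(c(\cO(-1))^{-1}\cap[\Pbb(E^\vee)])=c(E^\vee)^{-1}\cap[V]$ and a sign flip. This is the whole proof; I would replace your generating-function grind with this three-term decomposition.
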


\begin{proof}
Let $\rk(E)=r+1$. Using~\cite[Remark~3.2.3 (b)]{85k:14004},
\begin{equation}\label{eq:ideE}
\frac{c(\pi^*E^\vee\otimes \cO(1))}{c(\cO(1))} = 
c(\cO(1))^r + \sum_{i=1}^r c_i(\pi^*E^\vee) c(\cO(1))^{r-i} 
+ \frac{c_{r+1}(\pi^*E^\vee)}{c(\cO(1))}\quad.
\end{equation}
We have
\[
\pi_*\left(c(\cO(1))^r\cap [\Pbb(E^\vee)]\right)=\pi_*\left(c_1(\cO(1))^r\cap [\Pbb(E^\vee)]\right)=[V]
\quad:
\]
indeed, the other terms in the expansion of $(1+c_1(\cO(1)))^r$ push forward to zero by
\cite[Proposition~3.1(a)(i)]{85k:14004}, and the term $c_1(\cO(1))^r\cap [\Pbb(E^\vee)]$
pushes forward to $[V]$ by~\cite[Proposition~3.1(a)(ii)]{85k:14004}.

The middle term in the right-hand side of~\eqref{eq:ideE} pushes forward to~$0$. Indeed, 
by the projection formula it is a combination of terms
\[
c_i(E^\vee)\cap \pi_*(c_1(\cO(1))^j\cap [\Pbb(E^\vee)])
\]
with $j<r$, and $\pi_*(c_1(\cO(1))^j\cap [\Pbb(E^\vee)])\in A_{\dim V+r-j}(V)=(0)$ for $j<r$.

As for the last term in~\eqref{eq:ideE}, recall that
\[
\pi_*\left(c(\cO(-1))^{-1}\cap [\Pbb(E^\vee)]\right) = c(E^\vee)^{-1}\cap [V]\quad:
\]
indeed, this is essentially the definition of Chern class of a vector bundle according
to~\cite[\S3.2]{85k:14004}. It follows that
\[
\pi_*\left(c(\cO(1))^{-1}\cap [\Pbb(E^\vee)]\right) = (-1)^r c(E)^{-1}\cap [V]\quad,
\]
and therefore
\begin{align*}
\pi_*\left(\frac{c_{r+1}(\pi^*E^\vee)}{c(\cO(1))}\cap[\Pbb(E^\vee)]\right)
&=\pi_*\left((-1)^{r+1} c_{r+1}(\pi^*E) c(\cO(1))^{-1}\cap[\Pbb(E^\vee)]\right) \\
&=-c_{r+1}(E) c(E)^{-1}\cap [V]\quad,
\end{align*}
again by the projection formula.
\end{proof}

The computation of the push-forward of $\cvir(\cX)$ follows from this lemma.

\begin{prop}\label{prop:cvir}
Let $V$ be a nonsingular variety, and $X\subseteq V$ the zero-scheme of a section of a
vector bundle $E$ of rank $r+1$ on $V$. Let $\cV=\Pbb(E^\vee)$,
and let $\cX$ be the hypersurface of $\cV$ defined in~\S\ref{ss:alter}.
Then 
\[
\pi_*(\cvir(\cX))= r\cdot \csm(V) + \frac{c(TV)}{c(E)}\, c_\text{top}(E)\cap [V]
\]
in $A_*V$.
\end{prop}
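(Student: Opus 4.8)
The plan is to compute $\pi_*(\cvir(\cX))$ directly from the definition of the virtual tangent bundle of the hypersurface $\cX$ in the nonsingular variety $\cV=\Pbb(E^\vee)$, and then reduce the resulting pushforward to known quantities using Lemma~\ref{lem:pfE1}. Recall that since $\cX$ is a hypersurface in $\cV$, its virtual tangent bundle is $T\cV|_\cX - \cO(\cX)|_\cX$, and $\cO(\cX)=\cO(1)$ since the ideal of $\cX$ in $\cV$ is $\cO(-1)$ (as established in the proof of Lemma~\ref{lem:singy}). Hence
\[
\cvir(\cX) = \frac{c(T\cV)}{c(\cO(1))}\cap [\cX] = \frac{c(T\cV)}{c(\cO(1))}\, c_1(\cO(1))\cap [\cV]\quad,
\]
where the last equality uses that $[\cX]=c_1(\cO(\cX))\cap [\cV]=c_1(\cO(1))\cap[\cV]$ in $A_*\cV$. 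So the whole problem is to push forward the class $\frac{c(T\cV)}{c(\cO(1))}\, c_1(\cO(1))\cap[\cV]$ to $V$.

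First I would substitute the relative Euler sequence $0\to T^*_{\cV/V}\to \pi^*E\otimes \cO(-1)\to \cO_\cV\to 0$ — equivalently its dual, $0\to \cO_\cV\to \pi^*E^\vee\otimes\cO(1)\to T_{\cV/V}\to 0$ — to write $c(T\cV)=\pi^*c(TV)\cdot c(T_{\cV/V})=\pi^*c(TV)\cdot c(\pi^*E^\vee\otimes\cO(1))$. Thus
\[
\cvir(\cX) = \pi^*c(TV)\cdot \frac{c(\pi^*E^\vee\otimes\cO(1))}{c(\cO(1))}\, c_1(\cO(1))\cap[\cV]\quad.
\]
By the projection formula, $\pi_*(\cvir(\cX)) = c(TV)\cap \pi_*\!\left(\frac{c(\pi^*E^\vee\otimes\cO(1))}{c(\cO(1))}\, c_1(\cO(1))\cap[\cV]\right)$, so it remains to evaluate this last pushforward. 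The natural route is to write $c_1(\cO(1)) = (1+c_1(\cO(1))) - 1 = c(\cO(1)) - 1$, which splits the class as
\[
\frac{c(\pi^*E^\vee\otimes\cO(1))}{c(\cO(1))}\Bigl(c(\cO(1))-1\Bigr) = c(\pi^*E^\vee\otimes\cO(1)) - \frac{c(\pi^*E^\vee\otimes\cO(1))}{c(\cO(1))}\quad.
\]
The pushforward of the second summand is handled directly by Lemma~\ref{lem:pfE1}, giving $\left(1 - \frac{\ctop(E)}{c(E)}\right)\cap[V]$.

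For the first summand, $\pi_*\bigl(c(\pi^*E^\vee\otimes\cO(1))\cap[\cV]\bigr)$, I would expand $c(\pi^*E^\vee\otimes\cO(1))$ as a polynomial in $c_1(\cO(1))$ with coefficients pulled back from $V$ (again via \cite[Remark~3.2.3(b)]{85k:14004}), and then apply \cite[Proposition~3.1(a)]{85k:14004}: the only term surviving the pushforward is the one whose power of $c_1(\cO(1))$ equals $r=\rk E-1$, and that term pushes forward to its $V$-coefficient times $[V]$. A short bookkeeping check — the coefficient of $c(\cO(1))^r$ in the expansion \eqref{eq:ideE} is exactly $1$, i.e.\ the term $c(\cO(1))^r$ itself — shows $\pi_*\bigl(c(\pi^*E^\vee\otimes\cO(1))\cap[\cV]\bigr)=[V]$ as well, although one must be slightly careful that no lower Chern classes of $E^\vee$ contribute, which follows since they multiply strictly lower powers of $c_1(\cO(1))$. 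Combining: $\pi_*(\cvir(\cX)) = c(TV)\cap\left([V] - [V] + \frac{\ctop(E)}{c(E)}\cap[V]\right)$. This does not match the claimed answer, so the correct accounting must produce $\pi_*\bigl(c(\pi^*E^\vee\otimes\cO(1))c_1(\cO(1))^{?}\bigr)$ contributions more carefully — in fact expanding $c(\pi^*E^\vee\otimes\cO(1))\cdot c_1(\cO(1))$ directly (rather than splitting off $c(\cO(1))$) yields terms $c_i(E^\vee)c_1(\cO(1))^{r+1-i}$ for $i=0,\dots,r$, whose pushforwards are nonzero only for $i=1,\dots$ appropriately, and the surviving total is $r\cdot[V] + $ a correction; I would track these via $s_j(E^\vee)$-type identities to land on $r\cdot c(TV)\cap[V] + \frac{c(TV)}{c(E)}\ctop(E)\cap[V] = r\csm(V)+\frac{c(TV)}{c(E)}\ctop(E)\cap[V]$, using $\csm(V)=c(TV)\cap[V]$ since $V$ is nonsingular.

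The main obstacle is precisely this last bookkeeping step: getting the coefficient $r$ to emerge correctly from the pushforward of the mixed term $c(\pi^*E^\vee\otimes\cO(1))\,c_1(\cO(1))\cap[\cV]$. The cleanest way to organize it is probably to avoid the naive split and instead write the integrand as $\frac{c(\pi^*E^\vee\otimes\cO(1))}{c(\cO(1))}\bigl(c(\cO(1))-1\bigr)\cap[\cV]$, push forward each piece using Lemma~\ref{lem:pfE1} for the $-\frac{1}{c(\cO(1))}$ part and the Chern-class/Segre-class pushforward formulas of \cite[\S3.2]{85k:14004} for the $c(\cO(1))$-times part, and then assemble. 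Once the arithmetic is correctly done, substituting $\csm(V)=c(TV)\cap[V]$ gives the stated formula. Everything here is routine intersection theory; the only place one can go wrong is a sign or an off-by-one in the exponents of $c_1(\cO(1))$, so I would double-check the final identity against the special case $X=V$ (where $\cX$ is a $\Pbb^{r-1}$-sub-bundle pattern) and against the hypersurface case $r=0$, in which the formula should read $\pi_*(\cvir(\cX)) = \cvir(X)$ as expected.
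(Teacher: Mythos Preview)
Your approach is essentially the paper's: the decomposition $c_1(\cO(1))=c(\cO(1))-1$ is exactly the paper's rewriting $\frac{[\cX]}{1+\cX}\cap[\cV]=[\cV]-\frac{1}{1+\cX}\cap[\cV]$, and both use Lemma~\ref{lem:pfE1} for the second piece. The only problem is a concrete computational error in the first piece.

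You claim $\pi_*\bigl(c(\pi^*E^\vee\otimes\cO(1))\cap[\cV]\bigr)=[V]$, arguing that only the power $c_1(\cO(1))^r$ survives. This is false: powers $c_1(\cO(1))^j$ with $j\ge r$ \emph{also} push forward nontrivially, to Segre classes $s_{j-r}(E^\vee)\cap[V]$ (this is essentially the definition in \cite[\S3.2]{85k:14004}). Since the leading term of $c(\pi^*E^\vee\otimes\cO(1))$ is $c(\cO(1))^{r+1}$, not $c(\cO(1))^r$, the term $c_1(\cO(1))^{r+1}$ contributes, and your bookkeeping collapses. The correct value is
\[
\pi_*\bigl(c(\pi^*E^\vee\otimes\cO(1))\cap[\cV]\bigr)=(r+1)\,[V]\quad.
\]
The cleanest way to see this (and the one the paper uses) is to recognize $c(\pi^*E^\vee\otimes\cO(1))=c(T_{\cV/V})$ by the Euler sequence, so that $c(TV)\cap\pi_*\bigl(c(\pi^*E^\vee\otimes\cO(1))\cap[\cV]\bigr)=\pi_*(c(T\cV)\cap[\cV])=\pi_*\csm(\cV)=(r+1)\,\csm(V)$ by covariance~\eqref{eq:cova}, since the fibers of $\pi$ are $\Pbb^r$.

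Once you plug $(r+1)[V]$ in place of $[V]$, your own formula gives
\[
\pi_*(\cvir(\cX))=c(TV)\cap\Bigl((r+1)[V]-[V]+\tfrac{\ctop(E)}{c(E)}\cap[V]\Bigr)
=r\,\csm(V)+\tfrac{c(TV)}{c(E)}\ctop(E)\cap[V]\quad,
\]
exactly as claimed. So there was no need to abandon the split and ``track via $s_j(E^\vee)$-type identities''; the split was fine, you just miscounted one pushforward.
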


\begin{proof}
By definition, $\cvir(\cX)=c(T\cV) c(\cO(X))^{-1}\cap [V] = \frac{c(T\cV)}{1+\cX}\cap [\cX]$;
we implicitly view this as a class in $A_*\cV$. By the Euler sequence, the Chern class of the
relative tangent bundle of $\cV=\Pbb(E^\vee)$ is given by $c(T_{\cV/V})=c(E^\vee\otimes \cO(1))$;
therefore
\begin{equation}\label{eq:cTcV}
c(T\cV) = \pi^* c(TV) c(E^\vee\otimes \cO(1))\quad.
\end{equation}
Further, by the normalization and covariance of CSM classes (see~\eqref{eq:cova}),
\[
\pi_* (c(T\cV)\cap [\cV])=\pi_* \csm(\cV) = (r+1) \csm(V)\quad.
\]
(Exercise: Prove this from~\eqref{eq:cTcV}, without using covariance of CSM classes.)
Using these facts and Lemma~\ref{lem:pfE1}:
\begin{align*}
\pi_* \bigg( \frac{c(T\cV)}{1+\cX}\cap [\cX]&\bigg)
=\pi_*\left( c(T\cV)\cap \left([\cV]-\frac 1{1+\cX}\cap [\cV]\right)\right) \\
&=\pi_*\left(c(T\cV)\cap [\cV]\right) - c(TV)\cap \pi_*\left(c(T_{\Pbb(E^\vee)/V}) 
\frac 1{1+\cX}\cap [\cV]\right) \\
&=(r+1)\, c(TV)\cap [V] - c(TV)\cap \pi_*\left(\frac{c(E^\vee\otimes \cO(1))}{c(\cO(1))}\cap [\cV]\right) \\
&=r\cdot c(TV)\cap [V] + \frac{c(TV)}{c(E)}\, c_\text{top}(E)\cap [V]
\end{align*}
as stated.
\end{proof}

\subsection{}\label{ss:calc}
Proposition~\ref{prop:cvir} computes the push-forward of $\cvir(\cX)$. Using the covariance
of CSM classes, it is straightforward to obtain the push-forward of $\csm(\cX)$.

\begin{prop}\label{prop:csm}
Let $V$ be a nonsingular variety, and $\iota: X\hookrightarrow V$ the zero-scheme of a 
section of a vector bundle $E$ of rank $r+1$ on $V$. Let $\cV=\Pbb(E^\vee)$,
and let $\cX$ be the hypersurface of $\cV$ defined in~\S\ref{ss:alter}.
Then 
\[
\pi_*(\csm(\cX))= r\cdot \csm(V) + \iota_* \csm(X)
\]
in $A_*V$.
\end{prop}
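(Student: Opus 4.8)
I want to compute $\pi_*(\csm(\cX))$ by decomposing the constructible function $\one_\cX$ according to the fibers of $\pi$ restricted to $\cX$, and applying the covariance of CSM classes. The key geometric observation is that $\cX$ is the zero-scheme of the section $\sigma_\cX = \epsilon\circ\pi^*s$ of $\cO(1)$ on $\cV=\Pbb(E^\vee)$, where $\epsilon:\pi^*E\to\cO(1)$ is the dual tautological map. Over a point $v\in V$, the fiber of $\pi$ is $\Pbb(E^\vee_v)\cong\Pbb^r$, and the section $\sigma_\cX$ restricts on this fiber to the linear form on $\Pbb^r$ determined by the vector $s(v)\in E_v$. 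Thus: if $v\in X$, i.e.\ $s(v)=0$, the entire fiber $\Pbb^r$ lies in $\cX$; if $v\notin X$, i.e.\ $s(v)\neq 0$, then $\cX\cap\pi^{-1}(v)$ is a hyperplane $\Pbb^{r-1}$ in $\Pbb^r$.

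**Key steps.** First I would make the fiber analysis above precise scheme-theoretically and note that $\pi|_\cX:\cX\to V$ is proper (being the restriction of the proper map $\pi$). Then I apply the covariance property $\csm(f_*\varphi)=f_*\csm(\varphi)$ with $f=\pi$, $\varphi=\one_\cX$. By the definition of the push-forward of constructible functions, $(\pi|_\cX)_*\one_\cX$ is the function $v\mapsto \chi(\cX\cap\pi^{-1}(v))$, which by the fiber analysis equals $\chi(\Pbb^{r-1})=r$ for $v\notin X$ and $\chi(\Pbb^r)=r+1$ for $v\in X$. Hence $(\pi|_\cX)_*\one_\cX = r\cdot\one_V + \one_X$ as a constructible function on $V$. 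Applying $\csm$ and using covariance together with the basic fact $\csm(\one_V)=\csm(V)$ and $\csm(\one_X)=\iota_*\csm(X)$ (the abuse-of-language convention of \S\ref{ss:CSM}) yields $\pi_*(\csm(\cX)) = r\cdot\csm(V)+\iota_*\csm(X)$, as claimed. I should be careful that the push-forward notation $\pi_*(\csm(\cX))$ in the statement means $(\pi\circ j)_*$ where $j:\cX\hookrightarrow\cV$ is the inclusion, i.e.\ I first push $\csm(\cX)\in A_*\cX$ forward to $A_*\cV$ and then by $\pi$ to $A_*V$; covariance applies to the composite proper map $\cX\to V$.

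**Main obstacle.** The only subtle point is verifying that the Euler-characteristic computation of the fibers is the relevant one even when $X$ (and hence $\cX$) is nonreduced or singular: covariance of CSM classes only sees the fibers as topological spaces (or via the appropriate $\chi$ in positive-characteristic-free generality), with their reduced structure, so I need that $\cX\cap\pi^{-1}(v)$ has the reduced structure of $\Pbb^{r-1}$ resp.\ $\Pbb^r$ on underlying point sets — but $\chi$ of a projective space of dimension $r-1$ is $r$ regardless, and the constructible function push-forward is insensitive to scheme structure, so this causes no difficulty. A secondary check is that the section $\sigma_\cX$ does not vanish identically on any fiber $\pi^{-1}(v)$ with $v\notin X$; this is exactly the statement that $\epsilon$ is surjective (it is the dual of the tautological inclusion, hence surjective onto $\cO(1)$), so $\epsilon(\pi^*s(v))$ is a nonzero linear form whenever $s(v)\neq 0$. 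With these points settled, the proof is essentially immediate from covariance, and I would keep the write-up short accordingly.
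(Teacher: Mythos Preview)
Your proposal is correct and follows essentially the same argument as the paper: compute the push-forward of the constructible function $\one_{\cX}$ by analyzing the fibers of $\pi|_{\cX}$ (a $\Pbb^r$ over points of $X$, a hyperplane $\Pbb^{r-1}$ over points of $V\smallsetminus X$), obtain $\pi_*\one_{\cX}=r\cdot\one_V+\one_X$, and conclude by covariance of CSM classes. The paper's proof is exactly this, with the same fiber computation and the same appeal to covariance.
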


\begin{proof}
By definition of CSM class and by covariance,
\[
\pi_*(\csm(\cX)) = \pi_* \csm(\one_{\cX}) = \csm( \pi_* \one_{\cX})\quad.
\]
Now recall (\S\ref{ss:CSM}) that $\pi_*\one_{\cX}$ is the function assigning to $p\in V$ the
Euler characteristic of the fiber of $\cX$ over $p$. 
Use notation as above; in particular, $s:V\to E$ is the section defining $X$. If $p\in X$, 
then $s(p)=0$, and it follows that $s_\cX=\pi^*s\circ\epsilon\equiv 0$ along $\pi^{-1}(p)$.
That is, the fiber of $\cX$ over $p\in X$ equals the fiber of $\cV=\Pbb(E^\vee)$, so it
is an $r$-dimensional projective space. Therefore
\begin{equation}\label{eq:pinX}
p\in X \implies \pi_*(\one_\cX)(p) = \chi(\Pbb^r) = r+1\quad.
\end{equation}
If $p\not\in X$, then $s(p)\ne 0$; $\pi^* s$ is then a fixed vector $(a_0,\dots,a_r)$ of 
$E_p$ along the fiber $\pi^{-1}(p)$. The vanishing of $s_{\cX}$ at $(e_0:\cdots:e_r)
\in\pi^{-1}(p)$ is then equivalent to the linear equation
\[
a_0 e_0+\cdots + a_r e_r=0\quad.
\]
It follows that the fiber of $\cX$ over $p\not\in X$ is a hyperplane $\Pbb^{r-1}$ in the
fiber $\pi^{-1}(p)\cong \Pbb^r$. Therefore
\begin{equation}\label{eq:pnotinX}
p\not\in X \implies \pi_*(\one_\cX)(p) = \chi(\Pbb^{r-1}) = r\quad.
\end{equation}
Combining~\eqref{eq:pinX} and~\eqref{eq:pnotinX}, we obtain that
\[
\pi_*\one_{\cX} = r\cdot \one_V+\one_X\quad,
\]
and the covariance of CSM classes concludes the proof.
\end{proof}

\subsection{}\label{ss:fine}
After these preliminaries we are ready to prove the main result.

\begin{proof}[Proof of Theorem~\ref{thm:main}]
Applying~\cite[Theorem~I.4]{MR2001i:14009} to the hypersurface~$\cX$ gives
\[
\csm(\cX) = \cvir(\cX) + c(T\cV)c(\cO(1))^{-1}\cap \left(s(\cY,\cV)^\vee\otimes_\cV \cO(1)\right).
\]
Here we used the fact that $\cvir(\cX)=c(T\cV)c(\cO(\cX))^{-1}\cap [X] = c(T\cV)\cap s(\cX,\cV)$,
and the fact that $\cY$ is the singularity subscheme of $\cX$, proven in~Lemma~\ref{lem:singy}.
Pushing forward to~$V$ and using Propositions~\ref{prop:cvir} and~\ref{prop:csm}:
\begin{align*}
r &\cdot \csm(V) + \iota_*\csm(X)\\
& =\pi_*\csm(\cX)\\
&=\pi_*\left(\cvir(\cX)+c(T\cV)c(\cO(1))^{-1}\cap \left(s(\cY,\cV)^\vee\otimes_\cV \cO(1)\right)\right)\\
&=r\cdot \csm(V) + \frac{c(TV)}{c(E)}\, c_\text{top}(E)\cap [V]
+\pi_*\left(
c(T\cV)c(\cO(1))^{-1}\cap \left(s(\cY,\cV)^\vee\otimes_\cV \cO(1)\right)
\right)\quad.
\end{align*}
Therefore
\[
\iota_*\csm(X) - \frac{c(TV)}{c(E)}\, c_\text{top}(E)\cap [V]
=\pi_*\left(
\frac{c(T\cV)}{c(\cO(1))}\cap \left(s(\cY,\cV)^\vee\otimes_\cV \cO(1)\right)
\right)\quad.
\]
The statement of Theorem~\ref{thm:main} follows by applying~\eqref{eq:cTcV} and
the projection formula.
\end{proof}

\subsection{}\label{ss:compa}
Using the terminology of Milnor classes, Propositions~\ref{prop:cvir} and~\ref{prop:csm}
immediately imply the following statement.

\begin{prop}\label{prop:milnor}
With notation as above,
\[
\pi_*\cM(\cX) = (-1)^{\dim \cX}\left(
\frac{c(TV)}{c(E)}\, c_\text{top}(E)\cap [V] -\iota_* \csm(X)
\right)
\]
in $A_*V$.
\end{prop}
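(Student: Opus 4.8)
The plan is to derive Proposition~\ref{prop:milnor} directly from the two preceding propositions by unwinding the definition of the Milnor class applied to the hypersurface $\cX$. Recall from~\eqref{eq:milnordef} that for any local complete intersection $Z$ one has $\cM(Z) = (-1)^{\dim Z}(\cvir(Z) - \csm(Z))$. Since $\cX$ is a hypersurface in the nonsingular variety $\cV = \Pbb(E^\vee)$, it is in particular a local complete intersection, so this formula applies with $Z = \cX$: namely $\cM(\cX) = (-1)^{\dim \cX}(\cvir(\cX) - \csm(\cX))$.

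First I would push this identity forward to $V$ via $\pi_*$, which is legitimate since $\pi$ is proper and the Milnor class, virtual Chern class, and CSM class are all honest classes in the Chow group of $\cX$ (or of $\cV$ after the evident push-forward). This gives
\[
\pi_* \cM(\cX) = (-1)^{\dim \cX}\left(\pi_* \cvir(\cX) - \pi_* \csm(\cX)\right).
\]
Next I would substitute the explicit expressions supplied by Proposition~\ref{prop:cvir} and Proposition~\ref{prop:csm}: the former gives $\pi_*\cvir(\cX) = r\cdot\csm(V) + \frac{c(TV)}{c(E)}\ctop(E)\cap[V]$, and the latter gives $\pi_*\csm(\cX) = r\cdot\csm(V) + \iota_*\csm(X)$. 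Subtracting, the two copies of the term $r\cdot\csm(V)$ cancel exactly, leaving
\[
\pi_*\cM(\cX) = (-1)^{\dim \cX}\left(\frac{c(TV)}{c(E)}\ctop(E)\cap[V] - \iota_*\csm(X)\right),
\]
which is precisely the asserted formula.

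In truth there is no real obstacle here — this is a bookkeeping corollary, as the text itself signals by saying the two propositions "immediately imply" it. The only point requiring the slightest care is the cancellation of the $r\cdot\csm(V)$ terms, which is why the auxiliary bundle construction was set up so that $\cX$ restricts to a $\Pbb^r$ over $X$ and to a $\Pbb^{r-1}$ elsewhere: this is exactly what forces the "spurious" multiple of $\csm(V)$ to appear identically in both $\pi_*\cvir(\cX)$ and $\pi_*\csm(\cX)$ and hence to disappear in the difference. One might also remark, for the reader's orientation, that combining this with the second part of Corollary~\ref{cor:main} recovers the Milnor class of $X$ itself when $r+1 = \codim X$, since in that case $\cM(\cX)$ and $\cM(X)$ are related through the fact that $\cY$ is the singularity subscheme of $\cX$; but that comparison is not needed for the proof of the present statement.
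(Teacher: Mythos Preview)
Your proposal is correct and matches the paper's approach exactly: the paper does not give a separate proof of Proposition~\ref{prop:milnor} but simply notes that Propositions~\ref{prop:cvir} and~\ref{prop:csm} ``immediately imply'' it, which is precisely the subtraction you carry out. Your additional commentary on the cancellation of the $r\cdot\csm(V)$ terms is a helpful gloss but not required for the argument.
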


In particular, paying careful attention to the signs gives:

\begin{corol}\label{corol:milnor}
Assume $\iota: X\hookrightarrow V$ is a local complete intersection, defined as the 
zero-scheme of a regular section of a bundle of rank $\codim_XV$. Then
\begin{equation}\label{eq:equa}
\pi_*\cM(\cX) =\iota_* \cM(X)
\end{equation}
in $A_*V$.
\end{corol}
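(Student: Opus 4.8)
The plan is to specialize Proposition~\ref{prop:milnor} to the local complete intersection setting, mirroring the argument used in the proof of Corollary~\ref{cor:main}. Proposition~\ref{prop:milnor} already writes $\pi_*\cM(\cX)$ in terms of $\frac{c(TV)}{c(E)}\ctop(E)\cap[V]$ and $\iota_*\csm(X)$, so the only work is to identify the first of these terms with $\iota_*\cvir(X)$ under the present hypotheses and then to reconcile the outcome with the definition~\eqref{eq:milnordef} of $\cM(X)$.

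First I would invoke the standard facts about $E$ when $X=Z(s)$ for a regular section $s$ and $\rk E=\codim_X V$: in that case $E|_X\cong N_XV$, the virtual tangent bundle of $X$ is $\cvir(X)=(TV|_X)/(E|_X)$, and $\ctop(E)\cap[V]=\iota_*[X]$ in $A_*V$. Together these give
\[
\frac{c(TV)}{c(E)}\ctop(E)\cap[V]=\iota_*\cvir(X)
\]
in $A_*V$, exactly as recorded in the proof of Corollary~\ref{cor:main}. Substituting into Proposition~\ref{prop:milnor} and using $\cM(X)=(-1)^{\dim X}(\cvir(X)-\csm(X))$ yields
\[
\pi_*\cM(\cX)=(-1)^{\dim\cX}\iota_*\bigl(\cvir(X)-\csm(X)\bigr)=(-1)^{\dim\cX+\dim X}\iota_*\cM(X).
\]

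It then remains to check that $\dim\cX$ and $\dim X$ have the same parity. Since $\cX$ is a hypersurface in $\cV=\Pbb(E^\vee)$ and $\dim\cV=\dim V+r$ with $r+1=\rk E$, we get $\dim\cX=\dim V+r-1$; since $X$ has codimension $r+1$ in $V$, we get $\dim X=\dim V-r-1$, so $\dim\cX-\dim X=2r$ is even and $(-1)^{\dim\cX+\dim X}=1$. This proves~\eqref{eq:equa}. The only delicate point is this final sign bookkeeping: there is no real obstacle, but one must be careful to compare the dimensions of $\cX$ and $X$ (not of $\cV$ and $V$), since it is precisely the equality $\rk E=\codim_X V$ that forces the two signs to cancel.
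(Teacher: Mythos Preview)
Your proposal is correct and is precisely the argument the paper has in mind: the paper's own ``proof'' consists only of the phrase ``paying careful attention to the signs gives,'' indicating that the corollary follows from Proposition~\ref{prop:milnor} together with the identification $\frac{c(TV)}{c(E)}\ctop(E)\cap[V]=\iota_*\cvir(X)$ already used in Corollary~\ref{cor:main} and the parity check $\dim\cX-\dim X=2r$. You have simply made these steps explicit.
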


In the case of local complete intersections, the hypersurface $\cX$ was introduced 
in~\cite{miclalci}. In fact, 
in~\cite[Theorem~6.4]{miclalci}, Callejas-Bedregal, Morgado, and Seade obtain a {\em different\/}
expression relating the Milnor classes of $\cX$ and $X$ in the local complete intersection
case. Comparing~\eqref{eq:equa} and the expression from~\cite{miclalci} may lead to nontrivial 
identities for 
Chern classes of bundles associated with local complete intersections. It would be interesting 
to explore these consequences.

%%%

\section{CSM from Segre zeta functions}\label{s:zeta}

\subsection{}\label{ss:zeta}
The results proven in this note draw a direct bridge between Segre classes and CSM classes.
This should allow us to transfer information between these two notions; known facts about
Segre classes should tell us something about CSM classes. This section is an example of
this transfer.

It is known (\cite{MR3709134}) that Segre classes of subschemes of projective space admit
the following description. Let $f_0,\dots, f_m$ be forms of degrees $a_0,\dots, a_m$ 
respectively, in variables $x_0,\dots, x_n$. For $N\ge n$, let $\iota_N: Z_N\hookrightarrow 
\Pbb^N$ be the subscheme defined by the ideal $(f_0,\dots, f_m)$. Then there exists a rational 
function
\[
\zeta(t)=\frac{P(t)}{(1+a_0t)\cdots (1+a_mt)}\quad,
\]
with $P(t)$ a polynomial with nonnegative coefficients and leading term 
$a_0\cdots a_m t^{m+1}$, such that
\[
\iota_{N*} s(Z_N,\Pbb^N) = \zeta(H)\cap [\Pbb^N]\quad.
\]
Here $H$ denotes the hyperplane class. We call $\zeta(t)$ the `Segre zeta function'
determined by the forms $f_0,\dots, f_m$.

A version of this result holds for subschemes of {\em products\/} of projective spaces.
Let $\varphi_0,\dots,\varphi_m$ be bihomogeneous polynomials of 
bidegrees $(a_i,b_i)$, $i=0,\dots, m$, in variables $x_0,\dots, x_n$, $y_0,\dots, y_r$. 
For $N\ge n$, $R\ge r$, let $\iota_{N,R}:Z_{N,R}\hookrightarrow \Pbb^N\times \Pbb^R$ 
be the subscheme defined by the ideal $(\varphi_0,\dots, \varphi_m)$. Then there exists 
a rational function
\[
\zeta(t,u)=\frac{P(t,u)}{(1+a_0t+b_0u)\cdots (1+a_mt+b_mu)}\quad,
\]
with $P(t,u)$ a polynomial with leading term $\prod_i (a_i t + b_i u)$, such that 
\[
\iota_{N,R*} s(Z_{N,R},\Pbb^N\times \Pbb^R) = \zeta(H,h)\cap [\Pbb^N\times \Pbb^R]
\]
where $H$, resp., $h$ denotes the pull-back of the hyperplane class from $\Pbb^N$, 
resp., $\Pbb^R$.

\subsection{}\label{ss:statt}
Theorem~\ref{thm:mainpr} may be expressed in terms of these two-variable zeta functions. 
In fact, we are going to obtain CSM classes directly in terms of the numerator of the zeta 
function determined by the bihomogeneous polynomials defining the scheme $\cY$. This
may simplify the application of Theorem~\ref{thm:mainpr}, and also has the advantage of 
simultaneously computing the CSM classes of the subschemes $X_N\subseteq \Pbb^N$ 
defined by a choice of forms in $x_0,\dots, x_n$, for all $N\ge n$.
(This information could be assembled in a `Segre-Schwartz-MacPherson zeta function'.)

We use the notation introduced in~\S\ref{ss:projr}: $F_0,\dots, F_r$ are homogeneous
polynomials in $x_0,\dots, x_n$, of a fixed degree $d$; the corresponding subscheme $\cY$
of $\Pbb^n \times \Pbb^r$ is defined by the ideal generated by
\begin{equation}\label{eq:eqnY}
F_0,\,\dots,\, F_r\,;\quad \text{and}\quad y_0 \frac{\partial F_0}{\partial x_i} + \cdots +
y_r \frac{\partial F_r}{\partial x_i}\quad,\quad i=0,\dots, n\quad.
\end{equation}
The bidegrees of the generators are $(d,0)$, $(d-1,1)$;
some of the generators may vanish, in which case we view $0$ as a form of the corresponding 
(bi)degree. The chosen generators determine a Segre zeta function for $\cY$:
\begin{equation}\label{eq:zetaY}
\zeta(t,u) = \frac{P(t,u)}{(1+dt)^{r+1}(1+(d-1)t+u)^{n+1}}\quad.
\end{equation}
Therefore, we obtain a well-defined polynomial $P(t,u)\in \Zbb[t,u]$. This polynomial
has degree $n+r+2$, and its term of highest degree is $(dt)^{r+1}((d-1)t+u)^{n+1}$.
As we will see (Remarks~\ref{rem:refin1},~\ref{rem:refin2}), it is actually not necessary to 
know all terms of the polynomial $P(t,u)$ in order to apply the following result: the
terms of degree $\le n+1$ in $t$ and $\le r+1$ in $u$ suffice, and these are determined
by the Segre class of the subscheme $\cY_{n+1,r+1}$ of $\Pbb^{n+1}\times \Pbb^{r+1}$ 
defined by the ideal generated by the forms listed in~\eqref{eq:eqnY}.

\begin{theorem}\label{thm:zeta}
For $N\ge n$, let $\iota_N: X_N\hookrightarrow \Pbb^N$ be the subscheme defined
by the degree-$d$ forms $F_0,\dots, F_r\in k[x_0,\dots, x_n]$. 
With notation as above, let $\gamma(t)$ be the coefficient of $u^{r+1}$ in the polynomial
\[
Q(t,u):=(1+dt+u)^{n+r+2}\cdot P\left(\frac{-t}{1+dt+u}, \frac{-u}{1+dt+u}\right)\quad.
\]
Then 
\[
\iota_{N*} \csm(X_N) = (1+H)^{N-n}\gamma (H)\cap [\Pbb^N]\quad,
\]
where $H$ is the hyperplane class in $\Pbb^N$.
\end{theorem}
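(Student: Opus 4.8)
The strategy is to start from Theorem~\ref{thm:mainpr} and feed into it the Segre zeta function description of the scheme $\cY$ recalled in~\S\ref{ss:zeta}. For a fixed $N\ge n$, consider the scheme $\cY_N\subseteq \Pbb^N\times \Pbb^r$ defined by the forms in~\eqref{eq:eqnY} (now regarded as bihomogeneous polynomials in $x_0,\dots,x_N$, $y_0,\dots,y_r$). Since these forms only involve $x_0,\dots,x_n$, the two-variable zeta function result of~\cite{MR3709134} applies with $N$ varying, and gives
\[
\iota_{N,r\,*}\, s(\cY_N, \Pbb^N\times \Pbb^r) = \zeta(H,h)\cap [\Pbb^N\times \Pbb^r]
= \frac{P(H,h)}{(1+dH)^{r+1}(1+(d-1)H+h)^{n+1}}\cap [\Pbb^N\times \Pbb^r],
\]
with the \emph{same} polynomial $P(t,u)$ for all $N\ge n$. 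This is the point of the construction: one computation of $P$ serves every $N$.

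\textbf{Key steps.} First I would substitute this expression into formula~\eqref{eq:mainprfor} of Theorem~\ref{thm:mainpr}, applied with ambient $\Pbb^N$ in place of $\Pbb^n$ (so the numerator factor becomes $(1+H)^{N+1}(1+h)^{r+1}$ and the denominator $1+dH+h$). The key is to track the effect of the operations ${}^\vee$ and $\otimes_{\Pbb^N\times\Pbb^r}\cO(dH+h)$ on the class $\zeta(H,h)\cap[\Pbb^N\times\Pbb^r]$. Using the standard rules from~\cite[\S2]{MR96d:14004}, for a class written as a power series $p(H,h)\cap[\Pbb^N\times\Pbb^r]$ the dual replaces $(H,h)$ by $(-H,-h)$ in the \emph{codimension grading}, and tensoring by $\cO(dH+h)$ divides the codimension-$i$ part by $(1+dH+h)^i$; combining these, the class $\zeta(H,h)\cap[\Pbb^N\times\Pbb^r]$ is sent to
\[
\frac{1}{1+dH+h}\,\cdot\,\zeta^\vee\!\!\otimes\cdots
\]
which after the homogenization trick becomes, up to the overall power of $(1+dH+h)$ coming from the ambient dimension, the substitution $(t,u)\mapsto\bigl(\tfrac{-t}{1+dt+u},\tfrac{-u}{1+dt+u}\bigr)$ applied to $P$ and then cleared of denominators—precisely the polynomial $Q(t,u)$ in the statement. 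The two-variable homogenization $(1+dt+u)^{n+r+2}$ accounts exactly for the degree $n+r+2$ of $P$. After this substitution the rational factors $(1+dH)^{r+1}$ and $(1+(d-1)H+h)^{n+1}$ in $\zeta$ recombine with the numerator factors $(1+H)^{N+1}(1+h)^{r+1}$ and the denominator $1+dH+h$ of~\eqref{eq:mainprfor}; the bookkeeping should show that everything except one factor of $(1+H)^{N-n}$ cancels, leaving $(1+H)^{N-n}\,Q(H,h)\cap[\Pbb^N\times\Pbb^r]$ divided by an appropriate power. Finally, pushing forward along $\pi:\Pbb^N\times\Pbb^r\to\Pbb^N$ extracts the coefficient of $h^r$, equivalently the coefficient of $u^{r+1}$ after the homogenization in $u$, which is by definition $\gamma(t)$; this yields $\iota_{N*}\csm(X_N)=(1+H)^{N-n}\gamma(H)\cap[\Pbb^N]$.

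\textbf{Main obstacle.} The delicate part is the algebraic bookkeeping that shows all the competing rational factors collapse to the single factor $(1+H)^{N-n}$ and that the homogenization/dehomogenization in $u$ correctly implements the pushforward. In particular one must be careful that the operations ${}^\vee$ and $\otimes\cO(dH+h)$ are taken with respect to the ambient $\Pbb^N\times\Pbb^r$, so the codimension grading—and hence the signs and the powers of $1+dH+h$—depends on $N$; reconciling this $N$-dependence with the $N$-independence of $P(t,u)$, so that the net outcome is simply multiplication by $(1+H)^{N-n}$, is where the argument must be handled with care. A secondary subtlety is justifying that only the terms of $P$ of degree $\le n+1$ in $t$ and $\le r+1$ in $u$ actually contribute to $\gamma$, as asserted in Remarks~\ref{rem:refin1} and~\ref{rem:refin2}; this follows once one observes that higher-degree terms in $t$ die on $\Pbb^N$ only after multiplication by $(1+H)^{N-n}$ for the relevant range of $N$, but it should be recorded explicitly.
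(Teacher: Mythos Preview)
Your overall strategy—substituting the zeta-function expression for $s(\cY,\cV)$ into Theorem~\ref{thm:mainpr} and tracking the effect of ${}^\vee$ and $\otimes\cO(dH+h)$ to produce the polynomial $Q$—is exactly what the paper does, and your algebra identifying $s(\cY,\cV)^\vee\otimes\cO(dH+h)$ with $Q(H,h)/\big((1+h)^{r+1}(1+H)^{n+1}\big)$ is correct. However, there is a genuine gap at the step where you invoke Theorem~\ref{thm:mainpr}, and a second gap at the push-forward step.

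First, you propose to apply Theorem~\ref{thm:mainpr} in $\Pbb^N\times\Pbb^r$ for arbitrary $N\ge n$. But that theorem carries the hypothesis ``$r\ge n$'', which in your setup reads $r\ge N$; for $N>r$ it fails. In that range the right-hand side of~\eqref{eq:mainprfor} does not compute $\iota_{N*}\csm(X_N)$: by Theorem~\ref{thm:main} it differs from it by $\frac{c(T\Pbb^N)}{c(E)}\ctop(E)\cap[\Pbb^N]=\frac{(1+H)^{N+1}(dH)^{r+1}}{(1+dH)^{r+1}}\cap[\Pbb^N]$, which is nonzero. So the very first substitution is not justified for general~$N$.

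Second, even granting the formula, your claim that extracting the coefficient of $h^r$ is ``equivalently the coefficient of $u^{r+1}$ after homogenization'' is not correct. In $A_*(\Pbb^N\times\Pbb^r)$ one has $h^{r+1}=0$, so the term $\gamma(H)\,h^{r+1}$ of $Q(H,h)$ is invisible there; the coefficient of $h^r$ in $(1+H)^{N-n}\,Q(H,h)/(1+dH+h)$ is a combination of the coefficients $q_0(H),\dots,q_r(H)$ of $Q$ in~$h$, not $\gamma(H)=q_{r+1}(H)$. Indeed, a priori one does not even know that $Q$ has degree $r+1$ in~$u$; this is something the paper has to \emph{prove} (cf.~Remark~\ref{rem:refin1}).

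The paper repairs both issues at once by letting the second factor vary too: pad the generators with zeros to obtain $F_0,\dots,F_R$ for any $R\ge r$, and apply Theorem~\ref{thm:mainpr} in $\Pbb^N\times\Pbb^R$ with $R\ge N$, where the hypothesis is satisfied. The same $P$ still governs $s(\cY_{N,R},\Pbb^N\times\Pbb^R)$. Now the push-forward picks out the coefficient of $h^R$ in
\[
(1+H)^{N-n}(1+h)^{R}\cdot\frac{Q(H,h)}{(1+h)^r(1+dH+h)}\,,
\]
and this coefficient equals $\iota_{N*}\csm(X_N)$ \emph{for every} $R\ge N$. This $R$-independence is the missing idea: via the elementary stabilization Lemma~\ref{lem:sta} it forces $(1+h)^N Q(H,h)\big/\big((1+h)^r(1+dH+h)\big)$ to be a polynomial of degree~$N$ in~$h$, hence $Q$ has degree $r+1$ in~$h$ with leading coefficient equal (after multiplication by $(1+H)^{N-n}$) to $\iota_{N*}\csm(X_N)$. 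Without varying $R$ and invoking Lemma~\ref{lem:sta}, the identification of $\gamma$ with the CSM class does not go through.
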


\begin{remark}\label{rem:invo}
The transformation
\[
(t,u) \mapsto \left(\frac{-t}{1+dt+u}, \frac{-u}{1+dt+u}\right)
\]
is an involution, and sends $(1+dt+u)$ to $(1+dt+u)^{-1}$. It follows that the operation 
$P\mapsto Q$ defined in the statement of Theorem~\ref{thm:zeta} is an involution.
\qede\end{remark}

\begin{remark}\label{rem:refin1}
It will follow from the proof that $Q(t,u)$ is a polynomial of degree $r+1$ in $u$;
this does not appear to be evident from the definition given in the statement. Thus,
$\gamma(t)$ is actually the leading coefficient of $Q(t,u)$ viewed as a polynomial in $u$.
Since terms in $P(t,u)$ of degree $>r+1$ in $u$ do not contribute to the coefficient of 
$u^{r+1}$ in $Q(t,u)$, $\gamma(t)$ is in fact determined by the the terms of $P(t,u)$ of 
degree $\le r+1$ in $u$.
\qede\end{remark}

\begin{remark}\label{rem:refin2}
Since $Q(t,u)$ has degree $r+1$ in $u$, 
\[
v^{r+1} Q\left(t,\frac 1v\right)
\]
is a polynomial and $\gamma(t)$ is its constant term w.r.t.~$v$. Applying the involution,
\[
v^{r+1} Q\left(t,\frac 1v\right) = \frac {(1+v+dtv)^{n+r+2}}{v^{n+1}} 
P\left(\frac{-tv}{1+v+dtv}, \frac{-1}{1+v+dtv}\right)\quad,
\]
and therefore
\begin{equation}\label{eq:altezeta}
P\left(\frac{-tv}{1+v+dtv}, \frac{-1}{1+v+dtv}\right) =\gamma(t)\cdot  v^{n+1} +
\text{higher order terms in $v$}\quad.
\end{equation}

This gives an alternative computation of the term $\gamma(t)$ obtained in Theorem~\ref{thm:main}.
It also shows that the terms of $P(t,u)$ of degree $>n+1$ in $t$ do not affect $\gamma(t)$.
(However, note that~\eqref{eq:altezeta} may be affected by terms of $P(t,u)$ of degree $\ge r+1$ 
in $u$. This limits its applicability.)

Summarizing, only the terms of $P(t,u)$ of degrees $\le n+1$ in $t$ and $\le r+1$ in $u$
are needed in order to apply Theorem~\ref{thm:zeta}.
\end{remark}

\subsection{}
The proof of Theorem~\ref{thm:zeta} will use the following simple observation, 
for which we do not have a reference.

\begin{lemma}\label{lem:sta}
Let $S(h)$ be a power series with coefficients in a ring. Assume that the coefficient $C$
of $h^R$ in $(1+h)^R\cdot S(h)$ is nonzero and independent of $R$ for $R\ge N$. 
Then $(1+h)^N S(h)$ is a polynomial of degree $N$ in~$h$, with leading coefficient $C$.
\end{lemma}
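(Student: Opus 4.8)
The goal is to show that if $(1+h)^R S(h)$ has the same nonzero coefficient $C$ of $h^R$ for all $R \ge N$, then $(1+h)^N S(h)$ is a polynomial of degree $N$ with leading coefficient $C$. First I would write $S(h) = \sum_{i \ge 0} s_i h^i$ and compute the coefficient of $h^R$ in $(1+h)^R S(h)$ explicitly: it is $\sum_{i=0}^R \binom{R}{R-i} s_i = \sum_{i=0}^R \binom{R}{i} s_i$. So the hypothesis says
\[
\sum_{i=0}^R \binom{R}{i} s_i = C \qquad \text{for all } R \ge N.
\]
The key observation is that increasing $R$ by one and subtracting gives, using $\binom{R+1}{i} - \binom{R}{i} = \binom{R}{i-1}$,
\[
\sum_{i=1}^{R+1} \binom{R}{i-1} s_i = 0 \qquad \text{i.e.} \qquad \sum_{j=0}^{R} \binom{R}{j} s_{j+1} = 0 \quad \text{for all } R \ge N.
\]

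**Extracting vanishing of the $s_i$.** Now I would iterate this differencing operation. Applying the same "shift $R$ and subtract" step to the identity $\sum_{j=0}^R \binom{R}{j} s_{j+1} = 0$ yields $\sum_{j=0}^R \binom{R}{j} s_{j+2} = 0$ for $R \ge N$, and inductively $\sum_{j=0}^R \binom{R}{j} s_{j+k} = 0$ for all $k \ge 1$ and all $R \ge N$. Taking $R = N$ in the $k$-th such identity and doing a second (downward) induction on the index, one peels off the top coefficient: from $\sum_{j=0}^N \binom{N}{j} s_{j+k} = 0$ together with the already-established vanishing $s_{N+1} = s_{N+2} = \dots = s_{N+k-1} = 0$ (the base case $k=1$ reading $\sum_{j=0}^N \binom{N}{j} s_{j+1} = C - s_0 \cdot 1$... — here I would instead argue directly). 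Let me restructure: the cleanest route is to fix $R = N$ in $\sum_{j=0}^N \binom{N}{j} s_{j+k} = 0$ and induct on $k$ descending from large values. For $k$ large enough that $j + k > $ any prescribed bound this is vacuous, but since $S$ is a power series I cannot assume finite support a priori. So instead I would induct \emph{upward} on $k$: the $k=1$ relation at $R=N$ and $R=N+1$ forces $\binom{N+1}{N+1} s_{N+1} + (\text{lower } s_j, j \le N, \text{with equal binomials}) $ — subtracting the two gives $\sum_j \binom{N}{j} s_{j+1} = 0$, which I already have; the genuinely new information at step $R = N+1$ for the original identity is $s_{N+1}$'s coefficient $\binom{N+1}{N+1} = 1$, so $s_{N+1} = -\sum_{i=0}^N \binom{N+1}{i} s_i + C$, and one checks this equals $0$ using the $R=N$ relation and Vandermonde/Pascal bookkeeping. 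Continuing, $s_{N+1} = s_{N+2} = \dots = 0$, i.e. $S(h)$ is a polynomial of degree $\le N$.

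**Finishing.** Once $S(h) = \sum_{i=0}^N s_i h^i$ is known to be a polynomial of degree $\le N$, the product $(1+h)^N S(h)$ is a polynomial of degree $\le 2N$, and by definition $C$ is its coefficient of $h^N$. To conclude that $(1+h)^N S(h)$ has degree exactly $N$ with leading coefficient $C$, I would revisit the hypothesis with $R = 2N$ (or any $R \ge 2N$): since $\deg S \le N$, the coefficient of $h^R$ in $(1+h)^R S(h)$ for $R \ge 2N$ receives contributions only from $h^i$ in $S$ with $i \le N$ and $h^{R-i}$ in $(1+h)^R$ with $R - i \le R$, automatically satisfied, so it equals $\sum_{i=0}^N \binom{R}{R-i} s_i = \sum_{i=0}^N \binom{R}{i} s_i$; this is a polynomial in $R$ of degree $\le N$ in $R$ which is constantly $C$, forcing all the higher symmetric-function combinations of the $s_i$ to vanish and pinning down $(1+h)^N S(h)$. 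The quickest clean endgame: write $(1+h)^N S(h) = \sum_{m} c_m h^m$ with $c_m = 0$ for $m > 2N$; then $(1+h)^R S(h) = (1+h)^{R-N}\big(\sum_m c_m h^m\big)$ and its $h^R$-coefficient is $\sum_m \binom{R-N}{R-m} c_m$, a polynomial in $R$ of degree $\le 2N - N = N$... I would simply note it is a polynomial in $R$ equal to $C$ on the infinite set $R \ge N$, hence identically $C$; evaluating the leading behaviour as $R \to \infty$ isolates $c_N = C$ and $c_m = 0$ for $N < m \le 2N$.

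**Main obstacle.** The one subtle point is the upgrade from "the finite-difference identities hold for all $R \ge N$" to "$S$ is an honest polynomial of degree $\le N$" without circular reasoning: a priori $S$ is a formal power series, so I must extract $s_{N+1} = s_{N+2} = \cdots = 0$ purely from the binomial identities, which is the combinatorial heart of the argument (an iterated forward-difference / Möbius-type inversion on $\binom{R}{i}$). Once that is in hand, everything else is bookkeeping with Pascal's rule. An alternative, perhaps slicker, phrasing of the whole lemma: the hypothesis says the sequence $R \mapsto [h^R]\big((1+h)^R S(h)\big)$ is eventually constant; applying the substitution $h \mapsto h/(1-h)$ (so $(1+h) \mapsto 1/(1-h)$) converts $(1+h)^R S(h)$ into $(1-h)^{-R} \tilde S(h)$ and diagonal-coefficient extraction into an ordinary coefficient statement, after which the eventual constancy of a coefficient sequence of a rational function forces the relevant numerator to have bounded degree. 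I would present the elementary finite-difference version as the main proof and perhaps remark on this generating-function reformulation.
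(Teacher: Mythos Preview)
Your central claim --- that the hypothesis forces $s_i = 0$ for all $i > N$, so that $S(h)$ itself is a polynomial of degree at most $N$ --- is false. Take $N = 1$, $C = 1$, and
\[
S(h) = \frac{h}{1+h} = h - h^2 + h^3 - \cdots\,.
\]
Then $(1+h)^R S(h) = (1+h)^{R-1} h$, whose coefficient of $h^R$ is $\binom{R-1}{R-1} = 1$ for every $R \ge 1$; yet $S$ is not a polynomial. The conclusion of the lemma does hold: $(1+h)S(h) = h$ is a polynomial of degree $1$ with leading coefficient $1$. So the object one must control is $T(h) := (1+h)^N S(h)$, not $S(h)$; your repeated attempts to extract $s_{N+1} = s_{N+2} = \cdots = 0$ from the binomial identities cannot succeed, and the ``Finishing'' section, which is premised on $S$ being a polynomial, collapses. (A secondary issue there: the step ``a polynomial in $R$ equal to $C$ on an infinite set, hence identically $C$'' is not valid over an arbitrary coefficient ring.)

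The paper's proof works directly with $T(h)$ and is much shorter. The coefficient of $h^N$ in $T$ is $C$ by hypothesis. If $T$ were not a polynomial of degree $N$, let $M > N$ be minimal with $t_M := [h^M]T \ne 0$. Since $T$ has no terms in degrees $N+1,\dots,M-1$, multiplying by $(1+h)^{M-1-N}$ (a polynomial of degree $M-1-N$) carries the degree-$\le N$ part of $T$ into degrees $\le M-1$ and leaves the $t_M h^M$ term intact; hence $[h^M]\big((1+h)^{M-1}S(h)\big) = t_M$, while $[h^{M-1}]\big((1+h)^{M-1}S(h)\big) = C$ by hypothesis. Multiplying once more by $(1+h)$ gives $[h^M]\big((1+h)^M S(h)\big) = C + t_M$, but this must also equal $C$, forcing $t_M = 0$ --- a contradiction. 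Your differencing idea is in spirit the same ``compare consecutive $R$'' step, but applied to the coefficients of $T$ rather than of $S$ it finishes in one stroke.
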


\begin{proof}
The coefficient of $h^N$ in $(1+h)^N S(h)$ is $C$ by hypothesis.
Arguing by contradiction, assume that $(1+h)^N S(h)$ is {\em not\/} a polynomial
of degree $N$; then it must have a first nonzero term $s_M h^M$ with $M>N$.
Note that then
\[
(1+h)^{M-1} S(h) = s_0 + \cdots + C h^{M-1} + s_M h^M + \cdots\quad.
\]
It follows that 
\[
(1+h)^M S(h) = s_0 + \cdots + (C+s_M) h^M + \cdots\quad,
\]
so the coefficient of $h^M$ in $(1+h)^M S(h)$ is $C+s_M\ne C$, contrary to 
the hypothesis.
\end{proof}

We are now ready to prove Theorem~\ref{thm:zeta}. Its derivation from 
Theorem~\ref{thm:mainpr} is a good exercise in the use of the properties of the 
notation $\otimes$, ${}^\vee$.

\begin{proof}[Proof of Theorem~\ref{thm:zeta}]
Since the ideal of $X_N$ is generated by $F_0,\dots, F_r$, the corresponding subscheme
$\iota:\cY_{N,R}\hookrightarrow \cV_{N,R}:=\Pbb^N\times \Pbb^R$ is defined by the forms listed 
in~\eqref{eq:eqnY}, for all $N\ge n$ and $R\ge r$. (We may choose $F_{r+1}=\cdots=F_R=0$.) 
Therefore the push-forward of the Segre class of $\cY_{N,R}$ is given by the zeta 
function~\eqref{eq:zetaY}:
\[
\iota_* s(\cY_{N,R},\cV_{N,R}) = \left(\frac{P(H,h)}{(1+dH)^{r+1} (1+(d-1)H+h)^{n+1}}\right)
\cap [\Pbb^N\times \Pbb^R]\quad.
\]
By properties of $\otimes$, ${}^\vee$ from~\cite[Proposition~1]{MR96d:14004},
\begin{align*}
\iota_* s(\cY_{N,R},\cV_{N,R})^\vee\otimes_{\cV_{N,R}} &\cO(dH+h)
=\frac{P(-H,-h)}{(1-dH)^{r+1} (1-(d-1)H-h)^{n+1}}\otimes_{\cV_{N,R}} \cO(dH+h) \\
&=\frac{(1+dH+h)^{n+r+2}}{(1+h)^{r+1} (1+H)^{n+1}} 
\left(P(-H,-h)\otimes_{\cV_{N,R}} \cO(dH+h)\right)\\
&=\frac{(1+dH+h)^{n+r+2}P\left(\frac{-H}{1+dH+h},\frac{-h}{1+dH+h}\right)}{(1+h)^{r+1} (1+H)^{n+1}} 
\quad.
\end{align*}
Let $Q(t,u)$ be the polynomial $(1+dt+u)^{n+r+2}P\left(\frac{-t}{1+dt+u},\frac{-u}{1+dt+u}\right)$.
By Theorem~\ref{thm:mainpr}, $\iota_{N*}\csm(X_N)$ is the push-forward of
\begin{multline*}
\frac{(1+H)^{N+1} (1+h)^{R+1}}{1+dH+h} 
\left(s(\cY, \Pbb^n\times \Pbb^r)^\vee \otimes_{\Pbb^n\times \Pbb^r}\cO(dH+h)\right) \\
=(1+H)^{N-n} (1+h)^R \frac{Q(H,h)}{(1+h)^r (1+dH+h)}
\cap [\Pbb^N\times \Pbb^R]\quad,
\end{multline*}
provided $R\ge N$. The push-forward is obtained by capping against $[\Pbb^N]$
the coefficient of~$h^R$ in
\[
(1+H)^{N-n} (1+h)^R \frac{Q(H,h)}{(1+h)^r (1+dH+h)}\quad.
\]
We view this expression as a power series in $h$ with coefficients in $A_*(\Pbb^N)$,
and note that the coefficient of $h^R$ is $\iota_{N*}\csm(X_N)$, {\em independently
of $R\ge N$.\/} By Lemma~\ref{lem:sta}, 
\[
(1+H)^{N-n} (1+h)^N \cdot \frac{Q(H,h)}{(1+h)^r (1+dH+h)}
\]
is a {\em polynomial\/} in $h$ with coefficients in $A_*(\Pbb^N)=\Zbb[H]/(H^{N+1})$, 
of degree $N$ and leading coefficient $\iota_{N*}\csm(X_N)$. It then follows that
\[
(1+H)^{N-n} Q(H,h)
\]
is a polynomial of degree $r+1$ in $h$, with leading coefficient $\iota_{N*}\csm(X_N)$,
and this is the statement. 

Note that this argument shows that $Q(t,u)$ is a polynomial of degree $r+1$ in $u$ modulo
$t^{N+1}$ for every $N\gg 0$. It follows that it has degree $r+1$ in $u$ as a polynomial
in $\Zbb[t,u]$.
\end{proof}

\subsection{}
We give two examples illustrating Theorem~\ref{thm:zeta}.

\begin{example}
Consider the forms $F_0=x_1 x_2$, $F_1=x_0x_2$, $F_2=x_0x_1$. (Thus $n=r=2$.)
The corresponding scheme in $\Pbb^N$ consists of the union of three codimension~$2$
subspaces meeting along a common codimension~$3$ subspace. 

The generators of the ideal of $\cY$ in this example are
\[
x_1 x_2\,,\, x_0x_2\,,\, x_0 x_1\,;\, x_2 y_1 + y_2 x_1\,,\, y_0 x_2+ y_1 x_2\,,\, y_0 x_1+y_1 x_0
\quad.
\]
We have
\[
\zeta(t,u)=\frac{P(t,u)}{(1+2t)^3(1+t+u)^3}\quad;
\]
the requirement that $\zeta(t,u)$ evaluates the Segre class in $\Pbb^3\times \Pbb^3$
determines the terms of $P(t,u)$ of degree $\le n+1=3$ in $t$ and $\le r+1=3$ in $u$.
With the aid of the Macaulay2 package~\cite{HJ} we get
\[
P(t,u) = t^3+6t^3u+3t^2u^2+18 t^3u^2+3t^2u^3+8t^3u^3
+\text{higher order terms}\quad.
\]
The polynomial $Q$ appearing in the statement of Theorem~\ref{thm:zeta} is therefore
\[
(1+2t+u)^6\cdot P\left(\frac{-t}{1+2t+u}, \frac{-u}{1+2t+u}\right)
\equiv -t^3+3t^3u+(3t^2+3t^3)u^2+(3t^2+t^3)u^3\mod t^4
\]
This is necessarily a polynomial of degree $r+1=3$ in $u$ (Remark~\ref{rem:refin1}),
and the coefficient of $u^3$ is $3t^2+t^3$. By Theorem~\ref{thm:zeta}, we can conclude 
that
\begin{equation}\label{eq:u3ex}
\iota_{N*}\csm(X_N) = (1+H)^{N-2} (3H^2+H^3) \cap [\Pbb^N]\quad.
\end{equation}
For example, for $N=6$ this gives
\[
\iota_{6*}\csm(X_6) = 3[\Pbb^4]+13 [\Pbb^3]+22[\Pbb^2]+18[\Pbb^1]+7[\Pbb^0]\quad.
\]
The reader may enjoy verifying independently that~\eqref{eq:u3ex} holds, by using 
the geometric description of $X_N$ given at the beginning of this example.
\qede\end{example}

\begin{example}\label{ex:last}
In closing, we revisit the example given in~\S\ref{ss:exa}, consisting of the complete
intersection of $x_1 x_2 x_3=0$ and $x_0x_1^2+x_2^3=0$. Here $n=3,r=1$, so the
needed information can be extracted from the Segre class of the subscheme 
defined by
\[
x_1 x_2 x_3\,,\, x_0x_1^2+x_2^3\,;\,
y_1 x_1^2\,,\, y_0x_2x_3+2y_1x_0x_1\,,\, y_0x_1x_3+3y_1x_2^2\,,\, y_0x_1 x_2
\]
in $\Pbb^4\times \Pbb^2$. According to~\cite{HJ}, this Segre
class is
\[
(H^2+3H^2h+H^3+2H^2h^2-19H^3h-16H^4-30H^3h^2+69H^4h+240H^4h^2)\cap
[\Pbb^4\times \Pbb^2]
\]
where $H$, resp.~$h$ is the pull-back of the hyperplane class from the first, resp.~second
factor. It follows that the polynomial $P(t,u)$ corresponding to this example must be
\[
P(t,u) = t^2+15t^3+79t^4+(7t^2+75t^3+258t^4)u+(20t^2+132t^3+216t^4)u^2
+\text{higher order terms.}
\]
Applying the involution defined in the statement of Theorem~\ref{thm:zeta} gives
\[
Q(t,u) \equiv (t^2-3t^3-2t^4)+(-3t^2+3t^3-t^4)u+(5t^2+3t^3+t^4)u^2 \mod t^5
\]
and we can conclude that the CSM class of the complete intersection defined by
$x_1 x_2 x_3=0$ and $x_0x_1^2+x_2^3=0$ in $\Pbb^N$ pushes forward to
\[
(1+H)^{N-3}(5H^2+3H^3+H^4)\cap [\Pbb^N]\quad.
\]
The reader can verify that for $N=6$ this is in agreement with the result obtained in~\S\ref{ss:exa}.
\qede\end{example}

%%%

\end{document}